\documentclass[12pt]{amsart}
\usepackage{pinlabel}
\usepackage[all]{xy}

\usepackage{amsmath,graphics,xypic}
\usepackage{amsfonts,amssymb}
\usepackage[all]{xy}
\theoremstyle{plain}
\newtheorem{theorem*}{Theorem}
\newtheorem*{lemma*}{Lemma}
\newtheorem{corollary*}{Corollary}
\newtheorem*{proposition*}{Proposition}
\newtheorem{conjecture*}{Conjecture}
\newtheorem{theorem}{Theorem}[section]
\newtheorem{lemma}[theorem]{Lemma}

\newtheorem{proposition}[theorem]{Proposition}

\newtheorem{conjecture}[theorem]{Conjecture}

\theoremstyle{remark}

\newtheorem*{definition}{Definition}

\theoremstyle{definition}

\textwidth 6in    % 6
\oddsidemargin.25in    %  .7
\evensidemargin.25in     % .7
\marginparwidth=.85in
%\marginparsep=11pt
%\marginparpush=5pt
%%\parskip.05in
\def\stateh{\mathcal{H}}
\def\statefs{\mathcal{S}}

\def\mm{\mathfrak{m}}

\def\gl{\mbox{GL}} \def\Q{\Bbb{Q}}  \def\Z{\Bbb{Z}} \def\R{\Bbb{R}} \def\C{\Bbb{C}}
\def\N{\Bbb{N}}   \def\ll{\langle} \def\rr{\rangle}
 \def\a{\alpha} \def\g{\gamma}  \def\bp{\begin{pmatrix}}
\def\sm{\setminus} \def\ep{\end{pmatrix}} \def\bn{\begin{enumerate}} 
  \def\div{\mbox{div}} \def\en{\end{enumerate}}
\def\ba{\begin{array}} \def\ea{\end{array}}  
 \def\S{\Sigma}  \def\a{\alpha} \def\b{\beta} \def\ti{\tilde}
\def\id{\mbox{id}}  \def\im{\mbox{Im}} 
  \def\ker{\mbox{Ker}}
\def\ker{\mbox{Ker}}\def\be{\begin{equation}} \def\ee{\end{equation}} 
   
 \def\hom{\mbox{Hom}}  
 \def\aut{\mbox{Aut}}

          \def\znt{\Z^n[t^{\pm 1}]} 
\def\zt{\Z[t^{\pm 1}]}    
 \def\tnphi{||\phi||_T} 
     \def\fr12{\frac{1}{2}} \def\z12{\Z[\fr12]}

\def\tpm {[t^{\pm 1}]}

\def\i{\iota}

\def\G{\Gamma}

\def\G{\Gamma}
\def\PP{\mathcal{P}}

\def\deg{\mbox{deg}}

\begin{document}

\title{Twisted Alexander polynomials and fibered 3--manifolds}

\author{Stefan Friedl}
\address{University of Warwick, Coventry, UK}
\email{s.k.friedl@warwick.ac.uk}
\author{Stefano Vidussi}
\address{Department of Mathematics, University of California,
Riverside, CA 92521, USA} \email{svidussi@math.ucr.edu}
%\thanks{S. Friedl was  supported by a CRM--ISM Fellowship and by CIRGET}
\thanks{The second author was partially supported by NSF grant \#0906281.}

\date{\today}
\begin{abstract}
In a series of papers the authors proved that twisted Alexander polynomials detect fibered 3-manifolds, and they showed that this implies that
a closed 3--manifold $N$ is fibered if and only if $S^1\times N$ is symplectic. In this note we summarize some of the key ideas of the proofs.
We also give new evidence to the conjecture that if $M$ is a symplectic 4--manifold with a free $S^1$--action, then the orbit space is fibered.
\end{abstract}
\maketitle

%=====================================
\section{Introduction}

%=====================================
\subsection{Definitions and previous results}
A \emph{manifold pair} is a pair $(N,\phi)$ where  $N\ne S^1\times D^2, N\ne S^1\times S^2$ is an orientable connected 3--manifold with toroidal or empty boundary
and $\phi\in H^1(N;\Z)=\hom(\pi_1(N),\Z)$ is non--trivial.
%Given an oriented knot  $K\subset S^3$ we write $X(K):=S^3\sm \nu K$ and we let $\phi_K \in H^1(X(K);\Z)=\hom(\pi_1(X(K)),\Z)\cong \Z$ be the  generator which sends the oriented meridian to one. We refer to $(X(K),\phi_K)$ as the \emph{manifold pair associated to $K$}.
We say that a manifold pair
\emph{$(N,\phi)$ fibers over $S^{1}$}
 if there exists
 a fibration $p:N\to S^1$ such that the induced map $p_*:\pi_1(N)\to \pi_1(S^1)=\Z$ coincides with $\phi$.
 Given a manifold pair $(N,\phi)$ the \emph{Thurston norm} of $\phi$ is defined as
 \[
||\phi||_{T}=\min \{ \chi_-(\S)\, | \, \S \subset N \mbox{ properly embedded surface dual to }\phi\}.
\]
Here, given a compact surface $\S$ with connected components $\S_1\cup\dots \cup \S_k$, we define
$\chi_-(\S)=\sum_{i=1}^k \max\{-\chi(\S_i),0\}$.
Thurston
\cite{Th86} showed that this defines a seminorm on $H^1(N;\Z)$ which can be extended to a seminorm
on $H^1(N;\R)$.
%For example, let $K\subset \S^3$ be a non--trivial knot and $(X(K),\phi_K)$ the associated manifold pair, then $||\phi_K||_T=2\, \gen(K)-1$
%where $\gen(K)$ denotes the genus of the knot $K$.

Given a manifold pair $(N,\phi)$ and a homomorphism $\a:\pi_1(N)\to G$ to a finite group $G$ we can consider
the corresponding twisted Alexander polynomial $\Delta_{N,\phi}^\a\in \zt$. This invariant was initially introduced by
 Lin \cite{Lin01}, Wada \cite{Wa94} and Kirk--Livingston \cite{KL99}. We will recall the definition in Section \ref{sectionufd} and we refer to \cite{FV09} for a survey of the theory of twisted Alexander polynomials.

We say that $\Delta_{N,\phi}^\a\in \zt$ is \emph{monic} if its top coefficient equals $\pm 1$. Note that $\Delta_{N,\phi}^\a$ is palindromic, in particular if its top coefficient equals $\pm 1$, then its bottom coefficient also equals $\pm 1$.
Given a polynomial $p(t)\in \zt$ with $p=\sum_{i=k}^l a_it^i, a_k\ne 0, a_l\ne 0$ we define $\deg(p)=l-k$.

We have the following theorem, that gives a characterization of fibered 3-manifolds in terms of twisted Alexander polynomials.

\begin{theorem}\label{thm:fv08} \label{mainthm} Let $(N,\phi)$ be a manifold pair.
 Then $(N,\phi)$ is fibered if and only if
 for
 any epimorphism $\a:\pi_1(N)\to G$ onto a finite group
the twisted Alexander polynomial $\Delta_{N,\phi}^{\a}\in \zt$ is monic
and
\[ \deg(\Delta_{N,\phi}^{\a})= |G| \, \|\phi\|_{T} + (1+b_3(N)) \div \, \phi_{\a},\]
where $\phi_\a$ denotes the restriction of $\phi:\pi_1(N)\to \Z$ to $\ker(\a)$, and where
we denote by  $\div \,\phi_{\a} \in \N$ the divisibility of $\phi_{\a}$, i.e.
\[ \div \, \phi_{\a} =\max\{ n\in \N \, |\, \phi_{\a}=n\psi \mbox{ for some }\psi:\ker(\a)\to \Z\}.\]
 \end{theorem}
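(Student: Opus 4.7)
The statement splits naturally into two directions, the forward being a direct computation and the backward carrying the substance of the theorem.

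\emph{Forward direction.} Assume $(N,\phi)$ fibers with fiber $\S$ and monodromy $f\colon \S\to \S$. Given an epimorphism $\a\colon\pi_1(N)\to G$ to a finite group, the cover $\ti N\to N$ corresponding to $\ker(\a)$ inherits a fibration over $S^1$ via $\phi_\a$, whose fiber $\ti\S$ is a finite (possibly disconnected) cover of $\S$ with exactly $\div\phi_\a$ components and induced monodromy $\ti f$ permuting them cyclically. A Fox-calculus computation then identifies $\Delta_{N,\phi}^\a$, up to units, with the characteristic polynomial of $\ti f_\ast$ acting on $H_1(\ti\S;\Z)$ modulo torsion, which makes monicity automatic. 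The degree formula follows from bookkeeping: the bulk term $-\chi(\ti\S) = |G|\,\|\phi\|_T$ comes from multiplicativity of Euler characteristic under covers, while the cyclic permutation of the $\div\phi_\a$ components together with the torus-boundary contributions in the closed case assemble into the correction $(1+b_3(N))\div\phi_\a$.

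\emph{Backward direction.} This is the hard direction. Fix a Thurston-norm minimising surface $\S$ dual to $\phi$ and suppose for contradiction that $\S$ is not a fiber while all of the polynomial conditions hold. By Stallings' fibration theorem, equivalently via Gabai's sutured manifold hierarchy, non-fiberedness of $\S$ means the sutured manifold decomposition of $N\setminus\S$ has nontrivial \emph{guts}, i.e.\ a non-product piece $\scrg$. The plan is then to exhibit a finite quotient $\a$ whose twisted Alexander polynomial violates either monicity or the degree equality.

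\emph{The main obstacle.} Converting the geometric non-productness of $\scrg$ into concrete algebraic data on a twisted Alexander polynomial is where essentially all the work lies. My strategy would be to use subgroup separability of $\pi_1(N)$ to produce a finite index subgroup $\G\le\pi_1(N)$ such that, in the corresponding cover $\ti N$, a lift $\ti\S$ of $\S$ picks up extra first homology from the guts, beyond what a product neighbourhood would contribute. By a Shapiro-lemma identification, the ordinary Alexander polynomial of $(\ti N,\phi_\a)$ is then, up to standard manipulation, $\Delta_{N,\phi}^\a$ for $\a\colon\pi_1(N)\twoheadrightarrow\pi_1(N)/\G$; and the extra homology forces $\deg\Delta_{N,\phi}^\a$ to strictly exceed the predicted value or forces the top coefficient to be a non-unit, yielding the contradiction. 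The separability step is the genuine difficulty: in full generality it requires the virtual specialness / RFRS machinery of Agol and Wise, though for restricted classes of $3$-manifolds (graph manifolds, various hyperbolic families) earlier separability results suffice and make the argument self-contained.
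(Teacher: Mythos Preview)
Your forward direction is fine and essentially matches the paper's Lemma~\ref{lem:alexfib}: both compute $\Delta_{N,\phi}^\a$ as a characteristic polynomial coming from the fiber, with monicity following from the fact that the inclusion maps $\iota_\pm$ are homotopy equivalences, and the degree formula from an Euler characteristic count.

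Your backward direction, however, is not the paper's argument and has a genuine gap. You propose to start from the non-product guts of $N\setminus\S$ and use subgroup separability to manufacture a finite quotient $\a$ for which the degree or monicity condition fails. This is precisely the strategy the paper isolates in Section~\ref{section:vr} (Theorem~\ref{thm:sep}): if $\pi_1(N)$ is LERF one separates $\pi_1(\S)\subsetneq\pi_1(M)$ and obtains $\Delta_{N,\phi}^\a=0$ via the $H_0$ part of Theorem~B. But LERF is \emph{not} known for arbitrary $3$--manifolds (it fails for some graph manifolds, see \cite{NW01}), so this route does not prove the theorem in general; indeed the paper frames the separability approach as evidence for the still-open Conjecture~\ref{conj:zeroalex}, not as a proof of Theorem~\ref{mainthm}. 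Your invocation of ``virtual specialness / RFRS machinery of Agol and Wise'' conflates two different inputs and does not close this gap.

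The paper's actual argument runs in the opposite direction and avoids separability altogether. Assuming all $\Delta_{N,\phi}^\a$ satisfy Property~(M), Theorem~B gives that $\iota_\pm$ induce isomorphisms on $H_*(\,\cdot\,;\Z[G])$ for every finite $G$. An induction on derived length (Theorem~C, Propositions~\ref{prop:ind1} and~\ref{prop:ind2}) bootstraps this to the statement that $\iota_\pm^*:\hom(\pi_1(M),S)\to\hom(\pi_1(\S),S)$ is a bijection for every finite solvable $S$. One then passes to a finite cover with residually finite solvable fundamental group using the Aschenbrenner--Friedl result that $3$--manifold groups are virtually residually~$p$ (Theorem~\ref{thm:af09}). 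Finally---and this is the key geometric input, quite different from what you describe---Agol's RFRS virtual fibering criterion (Theorem~\ref{thm:agol2}) is applied to the sutured manifold $(M,\S_-,\S_+)$: if $M$ were not a product, Agol produces a solvable cover in which $[\S]$ lies on the boundary of a fibered face of the Thurston norm ball of the double $D_M$, and a comparison of the Thurston norm ball with the Alexander norm ball (computed from the metabelian information already controlled by Theorem~C) yields a contradiction. No guts decomposition and no subgroup separability enter.
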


The `only if' direction has been shown at various levels of generality by
Cha \cite{Ch03}, Kitano and Morifuji \cite{KM05}, Goda, Kitano and Morifuji \cite{GKM05}, Pajitnov \cite{Pa07}, Kitayama \cite{Kiy08}, \cite{FK06} and \cite[Theorem~6.2]{FV09}.
We will also outline the proof in Section \ref{section:twisted}.
The `if' direction is the main result of \cite{FV08c}.

The main goal of this paper is to provide a summary of the proof, and to show a few ways that the approach can be generalized to prove new results.

Revisiting the proof of Theorem \ref{thm:fv08} will also show that
in fact the following refinement of Theorem \ref{thm:fv08} holds:

\begin{theorem} \label{thm:fv08solv} Let $(N,\phi)$ be a manifold pair such that $\pi_1(N)$ is residually finite solvable (we refer to Section \ref{section:step5} for the definition).
 Then $(N,\phi)$ is fibered if and only if
 for
 any epimorphism $\a:\pi_1(N)\to G$ onto a finite solvable group
the twisted Alexander polynomial $\Delta_{N,\phi}^{\a}\in \zt$ is monic
and if the following equality holds
\[ \deg(\Delta_{N,\phi}^{\a})= |G| \, \|\phi\|_{T} + (1+b_3(N)) \div \, \phi_{\a}.\]
 \end{theorem}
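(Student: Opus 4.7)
The `only if' direction is a direct consequence of Theorem~\ref{thm:fv08}: if $(N,\phi)$ is fibered, then the monicness and degree equality hold for every epimorphism of $\pi_1(N)$ onto a finite group, and hence in particular onto a finite solvable group.

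For the `if' direction, the plan is to trace through the proof of the corresponding direction of Theorem~\ref{thm:fv08} from~\cite{FV08c}, whose outline appears in the subsequent sections, and to verify that each use of twisted Alexander polynomials can be arranged to involve only epimorphisms onto finite solvable groups. At a high level, that proof uses the twisted Alexander polynomial hypothesis over a system of finite covers of $N$ to draw topological conclusions, via a sutured manifold hierarchy, about the decomposition of $N$ along a Thurston norm minimizing surface $\S$ dual to $\phi$; the final step is to verify, by Stallings' criterion, that $\ker(\phi)\subset\pi_1(N)$ is finitely generated.

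The refinement to solvable quotients rests on two points. First, the finite covers arising naturally in the argument of~\cite{FV08c} are of an iterated nature built from solvable quotients of $\pi_1(N)$, and are therefore solvable. Second, residual finite solvability of $\pi_1(N)$ guarantees that the system of finite solvable quotients is rich enough to detect, step by step, the group-theoretic features of $\pi_1(N)$ that the argument exploits; every appeal in the original proof to residual finiteness of $\pi_1(N)$ to separate or fix specified elements in a finite quotient is replaced by an appeal to residual finite solvability to separate them in a finite solvable quotient.

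The main obstacle is to verify, at each step of the argument of~\cite{FV08c}, that the particular finite cover being invoked arises from, or can be replaced by, a cover coming from an epimorphism onto a finite solvable group, and that the twisted Alexander polynomial data extracted from this solvable cover is exactly what the topological conclusion demands. Once this verification is carried out, the remainder of the proof applies without further change and yields that $(N,\phi)$ is fibered.
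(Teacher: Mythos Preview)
Your overall strategy is correct and matches the paper's: the `only if' direction is immediate from Lemma~\ref{lem:alexfib}, and for the `if' direction the paper simply notes that Theorem~\ref{thm:fv08solv} is an immediate consequence of Theorems~A, C and~E. The point is that Theorem~C is already formulated to require Property~(M) only for epimorphisms onto finite \emph{solvable} groups, and Theorem~E is already formulated under the hypothesis that $\pi_1(M)$ is residually finite solvable (which it inherits as a subgroup of $\pi_1(N)$); so the only step that drops out is Theorem~D, the passage to a virtually residually finite solvable cover via Theorem~\ref{thm:af09}, which is now redundant since residual finite solvability is assumed. In other words, the ``verification'' you propose to carry out is already built into the way the paper packages Theorems~C and~E.

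Your description of the internal mechanism is somewhat off, however. The proof does not proceed via a sutured manifold hierarchy, and Stallings' criterion is not the final step but rather the initial reduction (Theorem~A(4)) to showing that $\i_\pm:\pi_1(\S)\to\pi_1(M)$ are isomorphisms. The genuine endgame is Theorem~E, whose proof rests on Agol's theorem (Theorem~\ref{thm:agol2}) that a taut sutured manifold with RFRS fundamental group has a solvable cover whose double lies near a fibered face, combined with a comparison of the Thurston and Alexander norm balls of that double. None of this affects the validity of your reduction, but it is worth correcting so that the sketch accurately reflects where the work lies.
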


Note that 3-manifolds with  residually finite solvable fundamental group are fairly frequent.
For example, as we will see in Theorem \ref{thm:af09}, any 3-manifold has a finite cover such that its fundamental group is residually finite solvable.
Also note that  the fundamental group of a fibered 3-manifold is always residually finite solvable.
%, therefore the statement above applies to characterize the fibered cone of $H^1(N,\Z)$.

%\begin{remark}
% If
%$K$ is a non--trivial fibered knot and $\a:\pi_1(X(K))\to \gl(1,\Z)$ the trivial representation,
%then Theorem \ref{thm:fibob} recovers the  classical result of Neuwirth that
%$\alexk$ is monic and that $\deg(\alexk)=2 \, \gen(K)$.
%\end{remark}

%The calculations in \cite{Ch03}, \cite{GKM05} and \cite{FK06} gave evidence that
%twisted Reidemeister torsion is  very successful at detecting non--fibered manifolds.
%Furthermore in \cite{FV08b} and \cite{FV07a} it was shown that twisted Alexander polynomials detect non--fibered manifolds with vanishing Thurston norm and non--fibered knots of genus one.
%

%=====================================
\subsection{Fibered manifolds and symplectic 4--manifolds}

The main application of the ``if" direction of Theorem \ref{thm:fv08} is in the proof of the following Theorem.

\begin{theorem} \label{thm:tc}
Let $N$ be a closed 3--manifold. Then $S^1\times N$ is symplectic if and only if $N$ is fibered.
\end{theorem}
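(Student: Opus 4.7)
The plan is to handle the two directions separately. For the easy direction, assume $(N,\phi)$ is fibered with fiber a closed orientable surface $\Sigma$, so that $S^1 \times N$ is a $\Sigma$-bundle over $T^2$. I would invoke Thurston's classical construction of symplectic forms on surface bundles: the fiberwise area form of $\Sigma$ extends to a closed 2-form on $S^1 \times N$ (using that any self-diffeomorphism of a closed surface is isotopic to an area-preserving one), and adding a sufficiently large multiple of the pullback of an area form on $T^2$ produces the desired symplectic structure.

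For the converse---the deep direction---the strategy is to apply Theorem \ref{thm:fv08}. Given a symplectic form $\omega$ on $M = S^1 \times N$, the first step is to extract a candidate class $\phi \in H^1(N;\Z)$. Via the K\"unneth decomposition, $[\omega]$ has a component $\tilde\phi \in H^1(N;\R)$, which is non-zero because $\omega \wedge \omega$ is a volume form on $M$; after rational approximation and clearing denominators, one obtains a non-trivial integral class $\phi$ for which I aim to verify the hypotheses of Theorem \ref{thm:fv08}.

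The decisive step is to check, for every epimorphism $\alpha: \pi_1(N) \to G$ onto a finite group, both the monic condition and the degree equality for $\Delta_{N,\phi}^\alpha$ demanded by Theorem \ref{thm:fv08}. I would pass to the finite cover $\tilde N \to N$ associated to $\alpha$ and observe that $\tilde M = S^1 \times \tilde N$ inherits a pulled-back symplectic form. Taubes' theorems for symplectic 4-manifolds then constrain the Seiberg-Witten basic classes of $\tilde M$: the canonical $\mathrm{Spin}^c$-structure is basic with $SW = \pm 1$, and an adjunction equality holds for a Thurston-norm-minimizing surface dual to the pullback of $\tilde\phi$. Combining this with a Meng-Taubes-type identification of the Seiberg-Witten polynomial of $S^1 \times \tilde N$ with an Alexander-type invariant of $\tilde N$, and then with the covering-space relation between the untwisted Alexander polynomial of $\tilde N$ and the twisted Alexander polynomial $\Delta_{N,\phi}^\alpha$ downstairs, should yield exactly the data required by Theorem \ref{thm:fv08}.

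The main obstacle is precisely this chain of translations: carefully unpacking Taubes' Seiberg-Witten constraints, matching SW basic classes to Alexander-polynomial terms via Meng-Taubes, and accounting for the delicate divisibility correction $(1+b_3(N))\div\,\phi_\alpha$ in the degree formula under passage to finite covers. Once these hypotheses are in place uniformly across finite-group quotients of $\pi_1(N)$, Theorem \ref{thm:fv08} immediately concludes that $(N,\phi)$ is fibered, and hence $N$ itself is fibered, as required.
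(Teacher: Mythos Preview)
Your proposal follows essentially the same route as the paper's outline: Thurston's construction for the easy direction, and for the hard direction Taubes' Seiberg--Witten constraints plus Meng--Taubes applied across all finite covers to feed Theorem~\ref{thm:fv08}. The one point where the paper is sharper is the degree formula: the ``adjunction equality for a Thurston-norm-minimizing surface'' you invoke does not come from Taubes' theorems alone but from combining Donaldson's existence of symplectic surfaces with Kronheimer's refined adjunction inequality specific to $S^1\times N$, and this is precisely the delicate step the paper isolates.
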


\begin{proof} \textit{(outline)}
%The sufficiency of this condition was proven by Thurston in \cite{Th76}.
We first consider the `if' direction. This direction was first proved by Thurston \cite{Th76}, and we present here a proof that is well-known to the experts.
Let $p:N\to S^1$ be a fibration. We write $\psi=p^*(dt)$ where $dt$ is the canonical non-degenerate closed 1--form  on $S^1=\R/\Z$.
By \cite{Ca69} we can find a metric on $N$ such that $\psi$ is harmonic. Denote by $*\psi$ the dual closed 2--form.
We now consider the 1--form $ds$ on $S^1$ as a 1--form on $S^1\times N$ by the pull back operation,
similarly we consider $\psi$ and $*\psi$ as forms on $S^1\times N$.
With this convention we now define:
\[ \omega=ds \wedge \psi +*\psi.\]
Clearly $\omega$ is a closed 2--form on $S^1\times N$. We furthermore calculate that
\[ \omega \wedge \omega = 2  ds \wedge \psi \wedge *\psi.\]
But since $\psi$ is non-zero everywhere, it follows that $\psi \wedge *\psi$ is a 3-form on $N$ which is non-zero everywhere.
Hence $\omega\wedge \omega$ is a 4-form on $S^1\times N$ which is non-zero everywhere. This shows that $\omega$ is a symplectic form on $S^1\times N$.

The `only if' direction  follows by showing that if $S^1 \times N$ is symplectic, then there exists a class $\phi \in H^1(N,\Z)$ determined by the symplectic form that satisfies the constraints described in Theorem \ref{mainthm}. This is achieved building on an idea of Kronheimer in \cite{Kr99}.
Suppose that $N$ is a closed 3-manifold such that $S^1\times N$ admits a symplectic form $\omega$. Without loss of generality we can assume that $\omega$ represents an integral class. Taubes proved in \cite{Ta94,Ta95} that the Seiberg--Witten invariants of a symplectic $4$--manifold satisfy very stringent constraints, that can be viewed as akin to a condition of ``monicness". This, together with the relation between Seiberg--Witten invariants of $S^1 \times N$ and the Alexander polynomial of $N$, due to Meng and Taubes, translates in the condition that $\Delta_{N,\phi}$ is monic,
where $\phi \in H^1(N,\Z)$ is the K\"unneth component of $[\omega]\in H^2(S^1 \times N;\Z)\cong H^1(N;\Z)\oplus H^2(N;\Z)$.
 Next, thanks to a theorem of Donaldson (\cite{Do96}), there exists a symplectic surface dual to (a sufficiently large multiple of) the symplectic form. Such surface satisfies the usual adjunction formula for symplectic surfaces. This formula, played against Kronheimer's adjunction inequality for manifolds of type $S^1 \times N$, gives a constraint on the top degree of the Alexander polynomial $\Delta_{N,\phi}$ in terms of the Thurston norm, more precisely \[ \deg(\Delta_{N,\phi})= \, \|\phi\|_{T} + 2 \div \, \phi.\]  The constraints above hold for all finite covers of $N$, as all finite covers of $S^1 \times N$ are symplectic as well. The connection between the twisted Alexander polynomials of $N$ and the ordinary Alexander polynomials of the finite covers of $N$ entails at this point that for any epimorphism $\a:\pi_1(N)\to G$ to a finite group the twisted Alexander polynomial $\Delta_{N,\phi}^{\a}$ is monic and \[ \deg(\Delta_{N,\phi}^{\a})= |G| \, \|\phi\|_{T} + 2 \div \, \phi_{\a}.\] (We refer to \cite{FV08a} for the details of the argument). The theorem follows at this point from Theorem \ref{mainthm}.
\end{proof}

We would like to mention that an alternative proof of the ``only if" direction of Theorem \ref{thm:tc} in the case that $b_1(N)=1$, and under a technical condition in the general case, follows from combining the work of Kutluhan--Taubes \cite{KT09}, Kronheimer--Mrowka \cite{KM08} and  Ni \cite{Ni08}. This proof requires  a more sophisticated study of the Seiberg--Witten theory of $S^1 \times N$ in the symplectic case.

%=====================================
\subsection{Non--fibered manifolds and vanishing twisted Alexander polynomials} \label{nonfib}

It is natural to ask whether the conditions in Theorem \ref{thm:fv08} can be weakened. In particular, in light of some partial results discussed below, we propose the following conjecture.

\begin{conjecture}\label{conj:zeroalex} \label{conj:alexzero}
Let $(N,\phi)$ be a manifold pair. If $(N,\phi)$ is not fibered, then there exists an epimorphism
$\a:\pi_1(N)\to G$ onto a finite group $G$ such that $\Delta_{N,\phi}^\a=0\in \zt$.
\end{conjecture}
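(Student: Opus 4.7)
The plan is first to reformulate the conjecture in the language of finite covers. For a surjection $\a: \pi_1(N) \to G$ with associated cover $\widehat N \to N$ and lifted class $\widehat \phi \in H^1(\widehat N;\Z)$, Shapiro's lemma identifies $H_1(N;\Z[G][t^{\pm 1}])$ with $H_1(\widehat N;\zt)$ as $\zt$-modules (the $\zt$-structure on the right coming from $\widehat\phi$), and hence $\Delta^\a_{N,\phi}$ vanishes precisely when the ordinary Alexander polynomial $\Delta_{\widehat N,\widehat \phi}$ does, i.e.\ precisely when $H_1$ of the infinite cyclic $\widehat\phi$-cover of $\widehat N$ has infinite rank over $\Q$. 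Conjecture~\ref{conj:alexzero} therefore reduces to the following topological statement: every non-fibered manifold pair $(N,\phi)$ admits a finite cover $\widehat N \to N$ for which the $\widehat\phi$-infinite cyclic cover has infinite first Betti number.

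I would then combine topological input with residual finiteness of $\pi_1(N)$. After reducing to the case that $N$ is irreducible (Alexander polynomials and fiberedness both behave well under prime decomposition) and, by Theorem~\ref{thm:af09}, passing to a finite cover where $\pi_1(N)$ is residually finite solvable, a rich supply of finite solvable quotients becomes available. By Stallings' fibration theorem, non-fiberedness of $(N,\phi)$ is then equivalent to $\ker(\phi) \subset \pi_1(N)$ being infinitely generated as a group. The strategy is to promote this qualitative fact to a quantitative one at the level of a cover: fix a Thurston norm minimizing surface $\S$ dual to $\phi$ and consider the Gabai sutured manifold hierarchy of $N$ cut along $\S$; the non-product ``guts'' of the hierarchy encode the failure of fiberedness, and their fundamental groups should surject onto well-chosen finite solvable quotients $G$ of $\pi_1(N)$. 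The resulting cover $\widehat N$ is arranged so that the guts lift to pieces that contribute infinitely many independent classes to $H_1$ of the $\widehat\phi$-cyclic cover, forcing infinite rank.

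The hard part will be exactly this last step: upgrading infinite generation of $\ker(\phi)$ to infinite $\Q$-rank of its abelianization after passage to a suitable cover. A priori these two statements are independent; indeed the ``only if'' direction of Theorem~\ref{thm:fv08} already shows that non-fiberedness is invisible to $\Delta_{N,\phi}$ itself, so the cover $\widehat N$ must be chosen to genuinely exploit the non-abelian structure of $\pi_1(N)$. A natural attack is to iterate along the derived series of $\pi_1(N)/\ker(\phi)$, tracking at each stage how the homology of the guts contributes to $H_1$ of successively deeper cyclic covers; I would first verify this in the graph manifold case, where Seifert data permit explicit computation, and then handle the hyperbolic case by combining the abundance of congruence covers with a careful analysis of how non-fibered classes in the Thurston norm ball sit inside finite covers.
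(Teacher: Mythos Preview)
This statement is a \emph{conjecture} in the paper, not a theorem; the paper does not prove it in general, only in the special cases listed in Theorem~\ref{thm:sepintro} and the further results of Section~\ref{section:vr}. So there is no ``paper's own proof'' to compare against, and your proposal must be assessed as an attempted proof of an open problem.

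Your reformulation via Shapiro's lemma is correct: the conjecture is equivalent to asking that some finite regular cover $\widehat N$ has $\widehat\phi$-infinite-cyclic cover with infinite first Betti number. But the proposal then stops precisely where the real difficulty begins. You yourself flag that the ``hard part'' is upgrading infinite generation of $\ker(\phi)$ to infinite $\Q$-rank of its abelianization after a cover, and that these are \emph{a priori} independent. The sentences that are meant to bridge this gap --- ``their fundamental groups should surject onto well-chosen finite solvable quotients $G$'', ``the resulting cover $\widehat N$ is arranged so that the guts lift to pieces that contribute infinitely many independent classes'' --- are assertions, not arguments. Nothing in the proposal explains which quotients, how the cover is arranged, or why the lifted guts produce infinite rank rather than, say, large but finite rank. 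Iterating along the derived series and invoking congruence covers are reasonable things to try, but you have not shown that either one works; this is exactly the content of the open conjecture. The graph manifold case you propose to handle first is already known (Theorem~\ref{thm:sepintro}(3)), and the paper's proof there does not go via your homological route.

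For comparison, the paper's partial results use a different mechanism altogether. Theorem~B says that $\Delta^\a_{N,\phi}\ne 0$ forces $\a(\pi_1(\Sigma))=\a(\pi_1(M))$ inside $G$; contrapositively, one wins by producing a finite quotient of $\pi_1(N)$ in which the image of $\pi_1(\Sigma)$ is strictly smaller than that of $\pi_1(M)$. This is a subgroup-separability problem, and every known case of the conjecture (LERF groups, peripheral separability, virtual retractions) is established this way. Your homological reformulation is equivalent in principle, but so far the separability viewpoint has been the only one to yield actual theorems.
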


%For example it seems reasonable to conjecture that $(N,\phi)$ fibers if all twisted Alexander polynomials are monic.
%Strong calculational evidence for this conjecture is given
%in \cite{Ch03}, \cite{GKM05} and \cite{FK06}.
%An affirmative answer  would be of great interest in the study of symplectic structures of 4--manifolds with a free circle action (cf. \cite{FV07b}).
%An equivalent question has also been raised as a conjecture by Goda and Pajitnov \cite[Conjecture~13.2]{GP05} in the study of Morse--Novikov numbers. We refer to
% \cite{GP05} and \cite{Pa07} for more information on the relationship between twisted Alexander polynomials, twisted Novikov homology and Morse--Novikov numbers.

Besides the interest \textit{per se} in sharpening the results of Theorem \ref{mainthm} there are other reasons to investigate Conjecture \ref{conj:zeroalex}. First, the proof of Theorem \ref{thm:tc} would be significantly simplified, bypassing the use of Kronheimer's refined adjunction inequality: Taubes' nonvanishing result for Seiberg--Witten invariants of symplectic manifolds would suffice to carry the argument. But more importantly, Conjecture \ref{conj:zeroalex} would imply a result akin to Theorem \ref{thm:tc} for all symplectic $4$--manifolds that carry a free circle action. For those manifolds, in fact, a refined adjunction inequality in the spirit of \cite{Kr99} does not seem available, and Taubes' constraints translate to a mere monicness
of the twisted Alexander polynomials of the orbit space. We state these observations in the following form, referring to \cite{FV07b} for details and for the extent to which the converse holds.

\begin{theorem} Let $M$ be a $4$--manifold which carries a free circle action with orbit space $N$.
If Conjecture \ref{conj:zeroalex} holds for $N$, then $M$ admits a symplectic structure only if $N$ fibers over the circle.
 \end{theorem}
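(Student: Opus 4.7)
The plan is to promote the \emph{monicness} of twisted Alexander polynomials produced by Taubes' constraints into \emph{nonvanishing}, so that the contrapositive of Conjecture \ref{conj:zeroalex} forces $(N,\phi)$ to fiber. The candidate class $\phi\in H^1(N;\Z)$ is the one singled out by the symplectic form: after rescaling so that $[\omega]\in H^2(M;\Z)$ is integral, I would take $\phi := p_*[\omega]$, the integration along the $S^1$-fibers of the principal bundle $p\colon M\to N$. Equivalently, $\phi$ is the component of $[\omega]$ that survives in the $H^1(N)$ piece of the Leray--Serre filtration of the circle bundle; this is the exact analog, for nontrivial bundles, of the K\"unneth component used in the proof of Theorem \ref{thm:tc}.

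Next, following the strategy of the proof of Theorem \ref{thm:tc} but adapted to a possibly nontrivial circle bundle, I would invoke Taubes' nonvanishing theorem for the Seiberg--Witten invariants of the symplectic manifold $M$. Combined with the circle-bundle analog of the Meng--Taubes correspondence, and its extension relating SW invariants of finite covers of $M$ to twisted Alexander polynomials of $N$ (as developed in \cite{FV07b}), this translates Taubes' constraints into the statement that for every epimorphism $\a\colon\pi_1(N)\to G$ onto a finite group the twisted Alexander polynomial $\Delta_{N,\phi}^{\a}\in\zt$ is monic. Here I use crucially that every finite cover of a symplectic manifold is symplectic and that a finite cover $\tilde{N}\to N$ pulls back to a symplectic cover $\tilde{M}\to M$ still carrying a free circle action with orbit space $\tilde{N}$, so Taubes' input applies uniformly over all finite covers.

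Since a monic polynomial is by definition nonzero, this yields $\Delta_{N,\phi}^{\a}\ne 0$ for every epimorphism $\a$ of $\pi_1(N)$ onto a finite group. The contrapositive of Conjecture \ref{conj:zeroalex}, applied to the pair $(N,\phi)$, then forces $(N,\phi)$ to be fibered, which is the desired conclusion. Note that this argument uses none of the degree equality of Theorem \ref{mainthm}; only the monicness output of Taubes is needed, and this is precisely the trade-off promised by Conjecture \ref{conj:zeroalex}.

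The main obstacle is the second step: verifying the Seiberg--Witten to twisted Alexander polynomial dictionary in the setting of nontrivial principal circle bundles over $3$-manifolds, in a form uniform across all finite covers. For the trivial bundle $S^1\times N$ this is carried out in \cite{FV08a} using Meng--Taubes together with the interpretation of twisted Alexander polynomials as ordinary Alexander polynomials of finite covers; the essential point to check is that the Euler class of the bundle $M\to N$ does not obstruct this translation and that the resulting leading coefficient is indeed a unit in $\Z$ at every finite cover. Everything else --- pulling back symplectic forms to covers, extracting $\phi$ from $[\omega]$, and applying the contrapositive of the conjecture --- is then essentially formal.
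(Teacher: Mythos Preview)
Your proposal is correct and follows exactly the approach sketched in the paper: Taubes' constraints on the Seiberg--Witten invariants of the symplectic $M$ (and of all its finite covers) translate, via the circle-bundle version of the Meng--Taubes correspondence developed in \cite{FV07b}, into monicness---hence nonvanishing---of $\Delta_{N,\phi}^{\a}$ for every finite quotient $\a$, and then the contrapositive of Conjecture~\ref{conj:zeroalex} forces $(N,\phi)$ to fiber. You have also correctly located the one nontrivial input, namely the SW/twisted-Alexander dictionary for nontrivial $S^1$-bundles, and correctly attributed it to \cite{FV07b}.
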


We also refer to \cite{Ba01} and \cite{Bo09} for related work on the problem of determining which 4--manifolds with a free circle action admit a symplectic structure.
We refer also to the work by Silver and Williams \cite{SW09a,SW09b} and by Pajitnov \cite{Pa08}
for  several interesting connections of Conjecture \ref{conj:zeroalex} to other problems in $3$--dimensional topology.

Conjecture \ref{conj:zeroalex} can be proven to hold for various classes of manifolds. In order to describe them in detail, we must introduce some definitions.

Let $\pi$ be a group and $\G\subset \pi$ a subgroup. We say $\G$ is \emph{separable} if for any $g\in \pi\sm \G$ there exists an epimorphism $\a:\pi\to G$ onto a finite group $G$ such that
$\a(g)\not\in \a(\G)$. Put differently, we can tell that $g$ is not in $\G$ by going to a finite quotient.
We say $\pi$ is \emph{locally extended residually finite (LERF)} if any finitely generated subgroup of $\pi$ is separable.

The following theorem proves Conjecture \ref{conj:zeroalex} in various special cases:

\begin{theorem} \label{thm:sepintro}
%\cite{FV07a,FV08b}
Let $(N,\phi)$ be a manifold pair. Suppose that $\Delta_{N,\phi}^\a\ne 0\in \zt$ for any epimorphism $\a:\pi_1(N)\to G$ onto a finite group $G$.
Furthermore suppose that one of the following holds:
\bn
\item $N=S^3\sm \nu K$ and $K$ is a genus one knot,
\item $\tnphi=0$,
\item $N$ is a graph manifold,
\item $\pi_1(N)$ is LERF.
\en
Then $(N,\phi)$ fibers over $S^1$.
\end{theorem}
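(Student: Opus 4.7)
The plan is to argue the contrapositive in each case: assume $(N,\phi)$ is not fibered and produce an epimorphism $\a:\pi_1(N)\to G$ onto a finite group with $\Delta_{N,\phi}^{\a}=0\in\zt$. The common tool is a homological interpretation of vanishing: $\Delta_{N,\phi}^{\a}=0$ precisely when the associated twisted Alexander module has positive rank over $\zt$, which by a standard spectral-sequence argument is equivalent to the first Betti number of the infinite cyclic cover of $N_{\a}$ corresponding to $\phi_{\a}$ being infinite. Combined with Stallings' theorem, the task reduces to producing, under each hypothesis and the assumption of non-fiberedness, a finite cover $N_\a$ in which $\ker\phi_\a$ is so far from finitely generated that $H_1$ of the infinite cyclic cover has infinite rank.

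For case (2), $\tnphi=0$ forces a Thurston-norm-minimizing surface $\S$ dual to $\phi$ to be a disjoint union of tori and annuli; if $(N,\phi)$ does not fiber, then $\S$ is not a fiber, and a direct sutured-manifold analysis in the cyclic cover produces compressibility and hence the desired vanishing. Case (1), the genus-one knot case, reduces to an explicit analysis of the JSJ decomposition of a genus-one Seifert surface complement: a non-fibered genus-one knot has a Seifert-fibered non-product ``guts'' piece, and its well-controlled finite quotients suffice to build a cover with vanishing twisted polynomial. Case (3) is analogous in spirit: on a graph manifold, the JSJ decomposition splits $N$ into Seifert-fibered pieces, each of which is virtually fibered and has abundant finite quotients; if the global manifold does not fiber then the pieces cannot be coherently fibered in any finite cover, and one produces a cover with unbounded $b_1$ in the cyclic layer.

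Case (4) is the most general and conceptually the key. Starting from a Thurston-norm-minimizing dual surface $\S$, apply Gabai's sutured-manifold theory to decompose $N\setminus\nu\S$ into product and non-product pieces; since $(N,\phi)$ is not fibered, some guts piece $P$ is present. The LERF hypothesis is then deployed to separate the finitely generated subgroup $\pi_1(P)\subset\pi_1(N)$ from suitably chosen coset elements that witness the non-trivial monodromy gluing $P$ into $N$. This yields an epimorphism $\a:\pi_1(N)\to G$ onto a finite group such that the preimage of $P$ in the cover $N_\a$ consists of many parallel copies which cannot be assembled into a mapping torus, and their cumulative contribution to $H_1$ of the infinite cyclic cover forces $\Delta_{N,\phi}^{\a}=0$.

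The main obstacle I expect is in case (4): the separability argument must be \emph{quantitatively effective}, in the sense that one cannot just separate $\pi_1(P)$ from a single element, but must simultaneously separate enough group-theoretic data to guarantee that the guts piece does not reassemble into a product after pulling back. This is where LERF is truly essential, as it permits separating finitely many finitely generated subgroups at once by passing to a common deep finite quotient. Making this precise requires combining Gabai's decomposition with a careful analysis of how the deck transformations act on the sutured structure — the geometric and group-theoretic inputs must be dovetailed, and this is the technical heart of the argument.
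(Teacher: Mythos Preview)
Your overall strategy---contrapositive, produce a finite quotient with vanishing twisted polynomial---is the right shape, but you miss the key lemma that makes the argument short, and as a result your proposed route is both unnecessarily complicated and not clearly correct.

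The paper's engine is Theorem~B: if $\Delta_{N,\phi}^{\a}\ne 0$ for an epimorphism $\a:\pi_1(N)\to G$, then
\[
\im\{\pi_1(\S)\xrightarrow{\i_\pm}\pi_1(M)\xrightarrow{\a}G\}=\im\{\pi_1(M)\xrightarrow{\a}G\}.
\]
This comes from the $H_0$-part of the Mayer--Vietoris sequence for the decomposition along $\S$. So to force $\Delta_{N,\phi}^{\a}=0$ one only needs a finite quotient of $\pi_1(N)$ in which $\pi_1(\S)$ has \emph{strictly smaller} image than $\pi_1(M)$. No spectral sequences, no infinite $b_1$ computations, no guts.

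For case~(4) this is then almost immediate: since $(N,\phi)$ is not fibered, Stallings gives that $\i_+:\pi_1(\S)\to\pi_1(M)$ is not onto; pick any $g\in\pi_1(M)\setminus\pi_1(\S)$, and use LERF (applied to the finitely generated subgroup $\pi_1(\S)\subset\pi_1(N)$) to find $\a:\pi_1(N)\to G$ finite with $\a(g)\notin\a(\pi_1(\S))$. Theorem~B then gives $\Delta_{N,\phi}^{\a}=0$. There is no need for Gabai's sutured hierarchy, no guts piece $P$, and no ``quantitatively effective'' separation of multiple subgroups---separating $\pi_1(\S)$ from a \emph{single} element suffices. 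Your worry in the last paragraph is about a difficulty that does not arise.

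Case~(2) is handled the same way: when $\|\phi\|_T=0$ the surface $\S$ is a torus, and Long--Niblo show that the subgroup carried by an embedded incompressible torus is always separable in $\pi_1(N)$, so the identical argument runs without the LERF hypothesis. Your proposed ``sutured-manifold analysis in the cyclic cover'' is not needed. Cases~(1) and~(3) in the references \cite{FV07a,FV08b} likewise exploit separability of the relevant surface subgroup (via peripheral separability, torus separability, or the structure of Seifert-fibered pieces) rather than JSJ bookkeeping of the sort you sketch.

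In short: the gap is that you are trying to detect vanishing via the \emph{rank} of the twisted Alexander module, which forces you into delicate cover constructions, whereas the paper detects it via the \emph{$H_0$-image condition} of Theorem~B, which reduces everything to separating a surface subgroup from one element.
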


We refer to \cite[Theorem~1.3]{FV07a} and \cite[Theorem~1,~Proposition~4.6,~Corollary~5.6]{FV08b} for details and proofs.
Note that it is conjectured (cf. \cite{Th82}) that $\pi_1(N)$ is LERF for any hyperbolic 3--manifold $N$.

In Section \ref{section:vr} we will give new conditions under which Conjecture \ref{conj:alexzero} holds.
\\

\noindent \textbf{Acknowledgments.}
The first  author would like to express his gratitude to the organizers of the Georgia International Topology Conference 2009 for the opportunity to speak and
for organizing a most enjoyable and interesting meeting.

%=========================================================
\section{Twisted invariants of 3--manifolds}
\label{sectionufd}
\label{section:twialex}
\label{section:twisted}

We recall the definition of twisted homology and cohomology and their basic properties.
Let $X$ be a  topological space and let $\rho:\pi_1(X)\to \gl(n,R)$ be a representation.
Denote by $ \widetilde{X}$ the universal cover of $X$.
Letting $\pi=\pi_1(X)$, we use
the representation $\rho$ to regard $R^n$ as a left $\Z[\pi]$--module.
The chain complex $C_*(\widetilde{X})$ is also a left $\Z[\pi]$--module via deck transformations.
Using the natural involution $g\mapsto g^{-1}$ on the group ring $\Z[\pi]$, we can  view $C_*(\widetilde{X})$ as a right $\Z[\pi]$--module and  form
the twisted homology groups
$$ H_*^\rho(X;R^n)=H_*(C_*(\widetilde{X})\otimes_{\Z[\pi]}R^n).$$

For most of the paper we will be interested in a particular type of representation.
Let $\phi\in H^1(X;\Z)$ and let
$\a:\pi_1(X)\to \gl(n,\Z)$ be a representation. We can now define a left $\Z[\pi_1(X)]$--module
structure on $\Z^n\otimes_\Z \zt=:\znt$ as follows:
\[  g\cdot (v\otimes p):= (\a(g)\cdot v)\otimes (\phi(g)\cdot p) = (\a(g) \cdot v)\otimes (t^{\phi(g)}p), \]
where $g\in \pi_1(X), v\otimes p \in \Z^n\otimes_\Z \zt = \znt$.
Put differently, we get a
representation $\a\otimes \phi:\pi_1(X)\to \gl(n,\zt)$.

We call the resulting twisted module $H_1^{\a\otimes \phi}(X;\znt)$ the
 \emph{twisted Alexander module of $(X,\phi,\a)$}.
When $\phi$ and $\a$ are understood, then we just  write $H_*(X;\znt)$.
 Now suppose  $X$ has finitely many cells in all dimensions.
Using that $\zt$ is a Noetherian UFD it follows that$H_i^{\a\otimes \phi}(X;\znt)$ is  a finitely generated module over
 $\zt$. We now denote by $\Delta^{\a}_{X,\phi,i}\in \zt$ the order of
  $H_1^{\a\otimes \phi}(X;\znt)$ and refer to it as the \emph{twisted Alexander polynomial of $(X,\phi,\a)$}.
 We refer to \cite{Tu01} or \cite[Section~2]{FV09}  for the precise definitions.
Note that the twisted Alexander polynomials are well--defined up to multiplication by an element of the form $\pm t^k, k\in \Z$.

We adopt the convention that we drop $\a$ from the notation if $\a$ is the trivial representation to $\gl(1,\Z)$.
If $\a:\pi_1(N)\to G$ is a homomorphism to a finite group $G$, then we get the regular representation $\pi_1(N)\to G\to \aut(\Z[G])$ where the second map is given by left multiplication. We can identify $\aut(\Z[G])$ with $\gl(|G|,\Z)$ and we obtain the corresponding twisted Alexander polynomial $\Delta_{N,\phi}^\a$.
%\item If $\pi$ is finitely presented group, then we define $\Delta^{\a}_{\pi,\phi,i}=\Delta^{\a}_{K(\pi,1),\phi,i}$.
%\item If $K$ is an oriented knot in $S^3$, then we write $\Delta^{\a}_{K,i}=\Delta^{\a}_{X,\phi_K,i}$ where $X=S^3\sm \nu K$ and $\phi_K:\pi_1(X(K))\to \Z$ is the map given by sending the oriented meridian to one.
%\en

As an example we give an outline of the proof of the `only if' direction in Theorem \ref{mainthm}.

\begin{lemma} \label{lem:alexfib} Let $(N,\phi)$ be a fibered manifold pair.
 Then
 for
 any epimorphism $\a:\pi_1(N)\to G$ onto a finite group
the twisted Alexander polynomial $\Delta_{N,\phi}^{\a}\in \zt$ is monic
and  the following equality holds
\[ \deg(\Delta_{N,\phi}^{\a})= |G| \, \|\phi\|_{T} + (1+b_3(N)) \div \, \phi_{\a}.\]
%where $\phi_\a$ denotes the restriction of $\phi:\pi_1(N)\to \Z$ to $\ker(\a)$, and where
%we denote by  $\div \phi_{\a} \in \N$ the divisibility of $\phi_{\a}$, i.e.
%\[ \div \, \phi_{\a} =\max\{ n\in \N \, |\, \phi_{\a}=n\psi \mbox{ for some }\psi:\ker(\a)\to \Z\}.\]
 \end{lemma}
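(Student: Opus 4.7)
The plan is to reduce the twisted statement to the classical (untwisted) computation of the Alexander polynomial of a fibered $3$--manifold by passing to the finite cover $\widehat{N}\to N$ corresponding to $\ker(\alpha)$.

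First, I would invoke a Shapiro--type lemma to identify
\[ \Delta_{N,\phi}^{\alpha}(t) \doteq \Delta_{\widehat{N},\phi_{\alpha}}(t), \]
where $\doteq$ denotes equality up to a unit in $\zt$. The point is that with the regular representation of $G$, the $\Z[\pi_1(N)]$--module $\Z[G][t^{\pm 1}]$ becomes, after restriction to $\ker(\alpha)$, just $\zt$ with the $\phi_{\alpha}$--twisting, so the twisted chain complex computing $H_*^{\alpha\otimes\phi}(N;\Z[G][t^{\pm 1}])$ is identified with the chain complex computing $H_*(\widehat{N};\zt)$ with module structure given by $\phi_{\alpha}$.

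Next, since $(N,\phi)$ is fibered, its finite cover $(\widehat{N},\phi_{\alpha})$ is also fibered. Write $\phi_{\alpha}=d\psi$, where $d=\div\phi_{\alpha}$ and $\psi$ is primitive; then $\psi$ is the class of the fibration of $\widehat{N}$ over $S^{1}$, and I denote the fiber by $\widehat{\Sigma}$ with monodromy $\varphi:\widehat{\Sigma}\to\widehat{\Sigma}$. A standard computation with the infinite cyclic cover (which is homotopy equivalent to $\widehat{\Sigma}$) gives $H_{1}(\widehat{N};\zt)\cong H_{1}(\widehat{\Sigma};\Z)$ with $t$ acting as $\varphi_{*}$, so
\[ \Delta_{\widehat{N},\psi}(t) \doteq \det(tI-\varphi_{*}), \]
which is monic of degree $r:=\dim_{\Q}H_{1}(\widehat{\Sigma};\Q)$ because $H_{1}(\widehat{\Sigma};\Z)$ is free abelian (the hypotheses $N\neq S^{1}\times D^{2},S^{1}\times S^{2}$ prevent the fiber from being a disk or a sphere). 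Since $\phi_{\alpha}=d\psi$, the module for $\phi_{\alpha}$ is obtained from the one for $\psi$ by the substitution $t\mapsto t^{d}$, giving $\Delta_{\widehat{N},\phi_{\alpha}}(t)\doteq\Delta_{\widehat{N},\psi}(t^{d})$. This is again monic, of degree $d\cdot r$.

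Finally, I would compute $r$ in terms of $\chi_{-}(\widehat{\Sigma})=\|\psi\|_{T}$: if $\widehat{N}$ is closed then $\widehat{\Sigma}$ is closed of some genus $g$, so $r=2g=\|\psi\|_{T}+2$; if $\widehat{N}$ has boundary then $\widehat{\Sigma}$ has boundary and $r=\|\psi\|_{T}+1$. Either way, $r=\|\psi\|_{T}+1+b_{3}(\widehat{N})=\|\psi\|_{T}+1+b_{3}(N)$. Invoking Gabai's multiplicativity of the Thurston norm under finite covers gives $d\,\|\psi\|_{T}=\|\phi_{\alpha}\|_{T}=|G|\,\|\phi\|_{T}$, so
\[ \deg\Delta_{N,\phi}^{\alpha}=d\cdot r = |G|\,\|\phi\|_{T}+(1+b_{3}(N))\,\div\phi_{\alpha}, \]
as desired. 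The main technical point to nail down is the first step (the Shapiro--type identification together with the behavior under the substitution $t\mapsto t^{d}$); the rest is essentially the classical monic-and-degree-equals-$2g$ computation for fibered manifolds plus Gabai's theorem.
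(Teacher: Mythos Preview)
Your argument is correct, but it takes a different route from the paper's. The paper works directly on $N$ with twisted coefficients: it writes down the Mayer--Vietoris short exact sequence
\[ 0\to H_1(\S;\Z[G])\otimes\zt\xrightarrow{\,t\i_+-\i_-\,}H_1(N\setminus\S;\Z[G])\otimes\zt\to H_1(N;\Z[G]\tpm)\to 0, \]
observes that in the fibered case both $\i_\pm$ are homotopy equivalences, so after choosing bases $\Delta_{N,\phi}^{\a}=\det(tA_+-A_-)$ with $A_\pm\in\gl(r,\Z)$ invertible; this is visibly monic of degree $r=\mbox{rank}_\Z H_1(\S;\Z[G])$. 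The value of $r$ is then read off from $\sum_i(-1)^i\,\mbox{rank}_\Z H_i(\S;\Z[G])=|G|\cdot(-\chi(\S))=|G|\,\|\phi\|_T$ together with an explicit computation of $\mbox{rank}_\Z H_0(\S;\Z[G])$ (and duality when $\S$ is closed), which is exactly where the term $(1+b_3(N))\,\div\phi_\a$ comes from. No covers, no Shapiro, no appeal to Gabai.

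Your approach via Shapiro's lemma is legitimate and has the conceptual payoff of exhibiting $\Delta_{N,\phi}^{\a}$ as the ordinary Alexander polynomial of the cover $\widehat N$. Two remarks. First, invoking Gabai's multiplicativity theorem is heavier than necessary here: since $\widehat\Sigma$ is a finite cover of the fiber $\S$ of degree $e=|\a(\ker\phi)|$, you get $\|\psi\|_T=-\chi(\widehat\Sigma)=e\cdot(-\chi(\S))=e\,\|\phi\|_T$ from multiplicativity of the Euler characteristic, and the identity $e\cdot d=|G|$ is an elementary coset count. Second, your parenthetical about $H_1(\widehat\Sigma;\Z)$ being free is slightly misplaced: $H_1$ of any compact orientable surface is free abelian, disk and sphere included; the hypothesis $N\ne S^1\times D^2,\,S^1\times S^2$ is needed instead for the formula $r=\|\psi\|_T+1+b_3(\widehat N)$, which fails exactly when the fiber is a disk or a sphere.
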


 \begin{proof}
First note that  exists a short exact Mayer--Vietoris sequence
 \[ 0\to H_1(\S;\Z[G])\otimes \zt \xrightarrow{t\i_+-\i_-} H_1(N\sm \S;\Z[G])\otimes \zt \to H_1(N;\Z[G]\tpm)\to 0,\]
 where $\Sigma$ is a fiber of $(N,\phi)$.
 Note that in the case of a knot complement and untwisted coefficients this is just the usual exact sequence relating the homology of a Seifert surface to the Alexander module of a knot
(cf. e.g. \cite[Theorem~6.5]{Lic97}).
 We write $r=\mbox{rank} \, H_1(\S;\Z[G])$, where the rank is taken as a $\Z$--module.
 Picking a basis (over $\Z$) for $H_1(\S;\Z[G])$ and $H_1(N\sm \S;\Z[G])=H_1(\S\times [0,1];\Z[G])$
 we can represent $\i_\pm$ by $r\times r$-matrices $A_\pm$. It follows from the definition of the Alexander polynomial
 that
 \[ \Delta_{N,\phi}^\a=\det(tA_+-A_-)=t^r\det(A_+)+ \dots+\det(-A_-).\]
 Since $\i_\pm$ are homotopy equivalences it follows that $\det(A_\pm)=\pm 1$. In particular $\Delta_{N,\phi}^\a$ is monic and of degree $r$. It remains to determine $r$. First note that we have
 \[ \sum_{i=0}^2 (-1)^i \mbox{rank} \, H_i(\S;\Z[G])=|G|\cdot \sum_{i=0}^2 (-1)^i \mbox{rank} \, H_i(\S;\Z)
 =|G|\cdot (-\chi(\S))=|G|\, ||\phi||_T.\]
 Furthermore recall that $\S$ is closed if and only if $N$ is closed. The formula for $r$ now follows from a direct calculation of the rank of $H_0(\S;\Z[G])$ and from duality in the case that $\S$ is closed.
 \end{proof}

%=====================================
\section{Summary of the proof of Theorem \ref{mainthm}}

Our goal is to give an outline of the proof of Theorem \ref{mainthm} given in \cite{FV08c}
without descending into the many technical details required in a rigorous write up.
We are acutely aware of the fact that the proof in \cite{FV08c} hides the forest behind a wall of trees.
%\footnote{Les sanglots longs des violons de l'automne \\ blessent mon coeur d'une langueur monotone}

%=====================================
\subsection{Step A: First observations}\label{section:step1}

%Throughout this section let $N$ be a 3-manifold which is either closed or has toroidal boundary.
Let $(N,\phi)$ be a manifold pair and $k\in \N$. Note that $(N,\phi)$ fibers if and only if $(N,k\phi)$ fibers and note that $||k\phi||_T=k||\phi||_T$. It follows now easily that it suffices to prove Theorem \ref{thm:fv08} for primitive $\phi\in H^1(N;\Z)$.\\

\noindent \textbf{Theorem A.}\emph{
Let $(N,\phi)$ be a manifold pair with $\phi\in H^1(N;\Z)$ primitive.
Assume that  $\Delta_{N,\phi}\ne 0$. Then the following hold:
\bn
\item
There
exists a connected Thurston norm minimizing surface
$\S$ dual to $\phi$.
\item Any connected  surface  $\S$ dual to $\phi$ intersects any boundary torus, in particular
$\S$ is closed if and only if $N$ is closed.
\item If $\Delta_{N,\phi}^\a\ne 0$ for any epimorphism $\a:\pi_1(N)\to G$ onto a finite group, then $N$ is irreducible.
\item The pair $(N,\phi)$ fibers over $S^1$ if and only if the maps
$\i_\pm:\pi_1(\S)\to \pi_1(N\sm \nu \S)$ are isomorphisms.
\en}

\begin{proof}
 If $\Delta_{N,\phi}\ne 0$, then it follows from  \cite[Section~4~and~Proposition~6.1]{McM02} that  there
exists a connected Thurston norm minimizing surface
$\S$ dual to $\phi$.

Now let $\S$ be any connected surface dual to $\phi$. Suppose that there exists a boundary torus $T$ of $N$ which $\S$ does not intersect. Then $T$ lifts to the infinite cyclic cover $\hat{N}$ of $N$ determined by $\phi : \pi_1(N) \to \Z$,
in particular $\hat{N}$ contains infinitely many tori in its boundary. A standard argument now shows that $b_1(\hat{N})=\infty$, but it is well-known (cf. \cite{Tu01}) that $b_1(\hat{N})=\deg \Delta_{N,\phi}$.

Statement (3) follows from an argument of McCarthy \cite{McC01} (see also \cite[Lemma~7.1]{FV08c} and \cite{Bo09}).
Note that the proof of (3) relies on the fact that 3-manifold groups are residually finite, which is a consequence of the proof of the Geometrization Conjecture (cf. \cite{Th82} and \cite{He87}).
The final statement is a consequence of Stallings' fibering theorem (\cite{St62} and \cite{He76}).
\end{proof}

Throughout this section  $\S$ will always denote a connected Thurston norm minimizing surface dual to $\phi$.
We write $M=N\sm \nu \S$ and denote the two canonical inclusion maps of $\S$ into $\partial M$ by $\i_\pm$.  Since $\S\subset N$ is Thurston norm minimizing  it follows from Dehn's lemma that the inclusion induced maps
$\pi_1(\S)\to \pi_1(N)$ and $\pi_1(M)\to \pi_1(N)$ are injective. In particular we can view $\pi_1(\S)$ and $\pi_1(M)$ as subgroups of $\pi_1(N)$.

Given an epimorphism  $\a:\pi_1(N)\to G$ onto a finite group $G$ we say $\Delta_{N,\phi}^\a$
 has \emph{Property (M)} if
 $\Delta_{N,\phi}^{\a}\in \zt$ is monic
and if \[ \deg(\Delta_{N,\phi}^{\a})= |G| \, \|\phi\|_{T} + (1+b_3(N)) \div  \phi_{\a}\] holds.

%=====================================
\subsection{Step B: Extracting information from twisted Alexander polynomials}\label{section:step2}

In view of Theorem A our strategy is now to
  translate the information coming from twisted Alexander polynomials into information on the maps $\i_\pm:\pi_1(\S)\to \pi_1(M)$.
We start with considering the untwisted polynomial:

\begin{lemma} \label{lem:untwistedh1}
If $\Delta_{N,\phi}$ has Property (M), then the maps $\i_\pm:H_1(\S;\Z)\to H_1(M;\Z)$ are isomorphisms.
\end{lemma}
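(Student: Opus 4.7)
The plan is to use the Mayer--Vietoris sequence for the infinite cyclic cover $\hat N \to N$ determined by $\phi$, obtained by splitting $\hat N$ along its lifts of $\Sigma$. Since $\phi$ is primitive and $\Sigma$ is connected, $\Sigma$ is non-separating and $M = N \sm \nu\Sigma$ is connected; moreover $\phi$ restricts trivially to $\pi_1(\Sigma)$ and to $\pi_1(M)$, so both lift to $\hat N$. The resulting sequence reads
\[
H_1(\Sigma) \otimes_\Z \zt \xrightarrow{\;f := t(\iota_+)_* - (\iota_-)_*\;} H_1(M) \otimes_\Z \zt \to H_1(N;\zt) \to \zt \xrightarrow{\;t-1\;} \zt,
\]
and $t-1$ being injective on $\zt$ gives $H_1(N;\zt) \cong \operatorname{coker}(f)$, whose order is by definition $\Delta_{N,\phi}$.

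Next I would record two numerical facts. First, a direct genus/boundary count, combined with Theorem~A(2) (which guarantees $\Sigma$ is closed iff $N$ is closed), gives
\[
r := \operatorname{rank}_\Z H_1(\Sigma) = \|\phi\|_T + 1 + b_3(N),
\]
so Property~(M) reads exactly: $\Delta_{N,\phi}$ is monic of degree $r$. Second, decomposing $H_1(M;\Z) = \Z^s \oplus T$ with $T$ the torsion subgroup and splitting $f$ as $(f_1, f_2)$ with $f_1\colon \zt^r \to \zt^s$ and $f_2\colon \zt^r \to T \otimes \zt$, the non-vanishing of $\Delta_{N,\phi}$ forces $f_1$ to have full $\zt$-rank $s$ (so in particular $s \le r$), producing the short exact sequence
\[
0 \to T \otimes_\Z \zt \to \operatorname{coker}(f) \to \operatorname{coker}(f_1) \to 0.
\]
Since $T \otimes \zt$ has degree-$0$ order $|T|$, multiplicativity of the order yields $\Delta_{N,\phi} \doteq |T| \cdot \operatorname{ord}(\operatorname{coker}(f_1))$. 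On the other hand, $\operatorname{ord}(\operatorname{coker}(f_1))$ is a g.c.d.\ of $s \times s$ minors of a matrix whose entries have $t$-degree $\le 1$, hence of $t$-degree $\le s$. Pitting this against $\deg \Delta_{N,\phi} = r$ yields $r \le s$, whence $s = r$. Writing $f_1 = tA_+^{\mathrm{free}} - A_-^{\mathrm{free}}$ as a square matrix, I then have
\[
\Delta_{N,\phi} \;\doteq\; |T| \cdot \det\!\bigl(tA_+^{\mathrm{free}} - A_-^{\mathrm{free}}\bigr),
\]
whose top and (via palindromy) bottom coefficients are, up to sign, $|T| \cdot \det A_\pm^{\mathrm{free}}$. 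Monicness then forces $|T| = 1$ and $\det A_\pm^{\mathrm{free}} = \pm 1$, so $H_1(M;\Z) \cong \Z^r$ is torsion-free and $(\iota_\pm)_*\colon H_1(\Sigma;\Z) \to H_1(M;\Z)$ are isomorphisms.

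The main technical nuisance is the bookkeeping around the torsion subgroup $T \subset H_1(M;\Z)$: the clean identification of $\Delta_{N,\phi}$ with $\det(tA_+ - A_-)$ familiar from the fibered setting of Lemma~\ref{lem:alexfib} is only valid once $T = 0$ is known, and the latter is extracted from monicness precisely by isolating the spurious factor $|T|$ above. Once this is handled, the proof reduces to a tight degree count, with the upper bound $\deg \Delta_{N,\phi} \le s$ (from the $s \times s$ minors) meeting the lower bound $\deg \Delta_{N,\phi} = r$ (from Property~(M)) to force $s = r$.
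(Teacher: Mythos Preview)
Your argument is correct and follows the same Mayer--Vietoris route as the paper, but with one genuine difference in emphasis: the paper simply asserts, via unspecified ``elementary arguments'', that $H_1(M;\Z)$ is free abelian of the same rank as $H_1(\Sigma;\Z)$, and then runs the determinant computation with square matrices $A_\pm$ from the start. You instead extract both the rank equality $s=r$ and the torsion-freeness $|T|=1$ directly from Property~(M), which makes the proof more self-contained at the cost of the extra bookkeeping you flag in your final paragraph.

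One small correction: the short exact sequence you write, with left term $T\otimes\zt$, is not literally correct until you know $\ker(f_1)=0$. In general the kernel of $\operatorname{coker}(f)\to\operatorname{coker}(f_1)$ is $(T\otimes\zt)/f_2(\ker f_1)$, a \emph{quotient} of $T\otimes\zt$. This does not damage your degree argument---the order of any such quotient still divides $|T|$ and hence has $t$-degree zero, so $r=\deg\Delta_{N,\phi}\le s$ goes through unchanged and yields $s=r$. Only then is $f_1$ square with nonzero determinant, hence injective, and your exact sequence holds as stated; from there the extraction of $|T|=1$ and $\det A_\pm^{\mathrm{free}}=\pm 1$ is exactly as you wrote.
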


This lemma is well-known in the case of the untwisted Alexander polynomial for knots. An early reference is given by \cite{CT63}
but see also \cite[Proposition~3.1]{Ni07} or \cite{GS08}.

\begin{proof}
We translate the information on twisted Alexander polynomials into information on the maps $\i_\pm:\pi_1(\S)\to \pi_1(M)$  by considering, as in Lemma \ref{lem:alexfib}, the following long exact Mayer--Vietoris sequence:
\be \label{equ:mv} \ba{cccccccccccccc} \hspace{-0.1cm}&\hspace{-0.1cm}&\hspace{-0.1cm}&\hspace{-0.1cm}\dots \hspace{-0.1cm}&\hspace{-0.1cm}\to\hspace{-0.1cm}&\hspace{-0.1cm} H_2(N;\Z\tpm)
\hspace{-0.1cm}&\hspace{-0.1cm}\to\hspace{-0.1cm}&\hspace{-0.1cm}\\[0.1cm]
 \to \hspace{-0.1cm}&\hspace{-0.1cm}H_1(\S;\Z)\otimes \zt \hspace{-0.1cm}&\hspace{-0.1cm}\xrightarrow{t\i_+-\i_-}
\hspace{-0.1cm}&\hspace{-0.1cm}H_1(M;\Z)\otimes \zt \hspace{-0.1cm}&\hspace{-0.1cm}\to\hspace{-0.1cm}&\hspace{-0.1cm}
H_1(N;\Z\tpm)\hspace{-0.1cm}&\hspace{-0.1cm}\to\hspace{-0.1cm}&\hspace{-0.1cm}\\[0.1cm]
\to \hspace{-0.1cm}&\hspace{-0.1cm}H_0(\S;\Z)\otimes \zt \hspace{-0.1cm}&\hspace{-0.1cm}\xrightarrow{t\i_+-\i_-}
\hspace{-0.1cm}&\hspace{-0.1cm}H_0(M;\Z)\otimes \zt \hspace{-0.1cm}&\hspace{-0.1cm}\to\hspace{-0.1cm}&\hspace{-0.1cm}
H_0(N;\Z\tpm)\hspace{-0.1cm}&\hspace{-0.1cm}\to\hspace{-0.1cm}&\hspace{-0.1cm} 0.
\ea \ee
%Note that in the case of a knot complement this is just the usual exact sequence relating the homology of a Seifert surface to the Alexander module of a knot
%(cf. e.g. \cite[Theorem~6.5]{Lic97}).
 Now suppose that
 $\Delta_{N,\phi}\in \zt$ is monic
and that\[ \deg \Delta_{N,\phi}= \|\phi\|_{T} + (1+b_3(N))\] holds.
Note that this in particular implies that $H_1(N;\zt)$ is $\zt$-torsion, since $H_0(\S;\Z)\otimes \zt$ is a free $\zt$-module it follows immediately that the map  $H_1(N;\zt)\to H_0(\S;\Z)\otimes \zt$  is the trivial map.
Now recall that $\S$ is connected and that $\S$ is closed if and only if $N$ is closed. In our context this implies that $ \|\phi\|_{T} + (1+b_3(N))$ equals twice the genus $g$ of $\S$.
Using elementary arguments one can show that $H_1(M;\Z)$ is a free abelian group of the same rank as $H_1(\S;\Z)$, namely $2g$.
Picking bases for $H_1(\S;\Z)$ and $H_1(M;\Z)$ we denote the corresponding $2g\times 2g$--matrices for $\i_\pm$ by $A_\pm$.
It now follows from the definition of the Alexander polynomial
that
\[ \Delta_{N,\phi}=\det(tA_+-A_-)= \det(A_+)t^{2g}+\dots+\det(-A_-).\]
Recall that we assumed that $\Delta_{N,\phi}$ is monic and that $\deg \Delta_{N,\phi}=2g$.
Since $\Delta_{N,\phi}$ is palindromic it now follows that $A_-$ and $A_+$ are invertible matrices, in particular the maps $\i_\pm:H_1(\S;\Z)\to H_1(M;\Z)$ are isomorphisms.
\end{proof}

Clearly the conclusion of the claim is not enough to deduce that $\pi_1(\S)\to \pi_1(M)$ is an isomorphism. In fact there exist many non-fibered knots whose  Alexander polynomial has Property (M).
We will therefore use the information coming from all twisted Alexander polynomials.

Using the idea of the proof the previous claim we can show the following (we refer to \cite[Theorem~3.2]{FV08c} for details).
If $\a:\pi_1(N)\to G$ is an epimorphism onto a finite group such that  $\Delta_{N,\phi}^\a$ has Property (M),
then the maps $\i_\pm:H_1(\S;\Z[G])\to H_1(M;\Z[G])$ are isomorphisms. In fact, considering the `$H_0$-part' of the Mayer--Vietoris sequence (\ref{equ:mv}) we see that the assumption $\Delta_{N,\phi}^\a\ne 0$ implies  that
the maps $ \i_\pm:H_0(\S;\Z[G])\to H_0(M;\Z[G])$
are isomorphisms. Using well-known properties of  0-th homology groups (cf. e.g. \cite[Section~VI]{HS97}) this condition is equivalent to
\[ \im\{\pi_1(\S)\xrightarrow{\i_\pm} \pi_1(M)\xrightarrow{\a} G\}=\im\{ \pi_1(M)\xrightarrow{\a} G\}.\]
For future reference we now summarize the results of the above discussion in the following theorem.
\\

\noindent \textbf{Theorem B.} \emph{
Let $\a:\pi_1(N)\to G$ be an epimorphism onto a finite group such that
  $\Delta_{N,\phi}^\a\ne 0$, then
\be \label{equ:h0} \im\{\pi_1(\S)\xrightarrow{\i_\pm} \pi_1(M)\xrightarrow{\a} G\}=\im\{ \pi_1(M)\xrightarrow{\a} G\}.\ee
If furthermore
  $\Delta_{N,\phi}^\a$ has Property (M), then
\be \label{equ:h1} \i_\pm:H_1(\S;\Z[G])\to H_1(M;\Z[G])\ee are isomorphisms
}

Our goal now is to show that the information we just obtained from twisted Alexander polynomials is in fact enough to deduce that $\i_\pm: \pi_1(\S)\to \pi_1(M)$ are isomorphisms.

 %=====================================
\subsection{Step C: Finite solvable quotients}\label{section:step4}

%We covered several interesting special cases in the last section, but just using the methods from the previous section will as of now not allow us to prove theorem \ref{thm:fv08} for all 3-manifolds.
%The problem is that in our burst of optimism we only used information coming from (\ref{equ:h0}) and we completely ignored (\ref{equ:h1}).

First recall that in the untwisted case we obtained the following conclusion:  if $\Delta_{N,\phi}$ has Property (M), then
the maps $\i_\pm:H_1(\S;\Z)\to H_1(M;\Z)$ are isomorphisms. Another way of saying this is that the maps $\i_\pm:\pi_1(\S)\to \pi_1(M)$ `look like an isomorphism on the abelian level'.
Our goal is now to show that if all twisted Alexander polynomials corresponding to finite solvable groups have Property (M), then the maps $\i_\pm:
\pi_1(\S)\to \pi_1(M)$ `look like isomorphisms on the finite solvable level'.
More precisely, we will prove the following theorem.
\\

\noindent \textbf{Theorem C.}\emph{
Let $(N,\phi)$ be a manifold pair such that for any epimorphism $\pi_1(N)\to S$ onto a finite solvable group the polynomial $\Delta_{N,\phi}^\a$ has Property (M).
Then for any finite solvable group $S$ the induced maps
\[ \i_\pm^*:\hom(\pi_1(M),S)\to \hom(\pi_1(\S),S)\]
are bijections.
}\\

The outline of the proof of Theorem C will require the remainder of this section.
We will now need to introduce a couple of definitions.
Given a solvable group $S$
we denote by $\ell(S)$ its derived length,
i.e. the length of the shortest decomposition into abelian groups.
%Put differently, $\ell(S)$ is the minimal number such that $S^{(\ell(S))}=\{e\}$.
Note that $\ell(S)=0$ if and only if $S=\{e\}$.

%A group $G$ is called \emph{abelian--by--finite solvable} if $G$ fits into a short exact sequence
%\[ 1\to I\to G\to S\to 1,\]
%where $I$ is (a possibly infinite) abelian group and $S$ is finite solvable. We let $\ell'(G)$ be the minimal $\ell(S)+1$ over all such decompositions.
%Note that for a non--trivial finite solvable group $S$ we have $\ell'(S)=\ell(S)$, as we can always write $S$ as extension
% of $S/S^{(\ell(S)-1)}$ (which has length $(\ell(S)-1)$) by the abelian group $S^{(\ell(S)-1)}$.
Given $n\in \N\cup \{0\}$ we denote by $\statefs(n)$ the statement that for any finite solvable group $S$
with $\ell(S)\leq n$ the maps
\[ \i^*_\pm:\hom(\pi_1(M),S)\to \hom(\pi_1(\S),S)\]
are bijections.  It is a straightforward exercise to see that $\i_\pm:H_1(\S;\Z)\to H_1(M;\Z)$ are isomorphisms if and only if $\statefs(1)$ holds.

%This is equivalent by Lemma \ref{lem:pro} to assert that $\i : A \to B$ induces an isomorphism of pro--$\mathcal{FS}(n)$ completions. Recall that by
%Corollary \ref{cor:group1} and Lemma \ref{lem:im} statement $\statefs(n)$ implies then that for any homomorphism $\b:B\to S$ to a finite solvable group $S$ with $\ell(S)\leq n$ we have
%$\im\{\b\circ \i:A\to B\to S\}=\im\{\b:B\to S\}$.
%\footnote{I changed this paragraph. It's not perfect though. The remark in parenthesis which we have to mention is a %little long now and sits awkwardly
%in the text. But I don't know where else to put it.}
%\footnote{I also created commands stateh and statefs for $\stateh$ and $\statefs$, so that it's easier to change the %look.}
%The reader will recognize that the condition $\statefs(n)$ asserts that $A$ and $B$ have the same pro-${\mathcal FS}(n)$ completions.

Note that Theorem C says  that $\statefs(n)$ holds for all $n$ if
all twisted Alexander polynomials corresponding to finite solvable groups have Property (M). We will show that this
does indeed hold by induction on $n$.
For the induction argument we use the following  auxiliary statement:
Given $n\in \N\cup \{0\}$ we denote by $\stateh(n)$ the statement that for any epimorphism $\b:\pi_1(M)\to T$ where $T$ is finite solvable with $\ell(T)\leq n$
the maps \[ \i_{\pm} :H_1(\S;\Z[T])\to H_1(M;\Z[T]) \]
are isomorphisms.

\begin{proposition}\cite[Proposition~3.3]{FV08c} \label{prop:ind1}
If $\stateh(n)$ and $\statefs(n)$ hold, then $\statefs(n+1)$ holds as well.
\end{proposition}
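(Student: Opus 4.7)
The plan is to reduce the lifting question for $S$ to cohomological comparisons that are controlled by $\stateh(n)$. Every finite solvable $S$ with $\ell(S) \le n+1$ fits into an extension $1 \to A \to S \xrightarrow{\pi} T \to 1$ with $A$ finite abelian and $\ell(T) \le n$, so post-composing with $\pi$ yields a commutative square
\[
\begin{array}{ccc}
\hom(\pi_1(M),S) & \xrightarrow{\iota_\pm^*} & \hom(\pi_1(\S),S) \\
\downarrow & & \downarrow \\
\hom(\pi_1(M),T) & \xrightarrow{\iota_\pm^*} & \hom(\pi_1(\S),T)
\end{array}
\]
with vertical maps given by post-composition with $\pi$; the bottom arrow is a bijection by $\statefs(n)$. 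It therefore suffices to show that, for every $\bar\Phi \in \hom(\pi_1(M),T)$, the induced map of fibers $\pi_*^{-1}(\bar\Phi) \to \pi_*^{-1}(\bar\phi)$ is a bijection, where $\bar\phi := \bar\Phi \circ \iota_\pm$.

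Classical obstruction theory for group extensions describes each fiber. When non-empty, the set of lifts of $\bar\Phi$ is a torsor over the group $Z^1(\pi_1(M); A)$ of $A$-valued crossed homomorphisms, where $A$ carries the $\pi_1(M)$-module structure induced by $\bar\Phi$ and the conjugation action of $T$ on $A$; non-emptiness is governed by the obstruction $\bar\Phi^*[e] \in H^2(\pi_1(M); A)$, the pullback of the extension class $[e]\in H^2(T;A)$. Analogous statements hold for $\pi_1(\S)$. Bijectivity of the fiber map thus reduces to two claims: (i) $\iota_\pm^* : Z^1(\pi_1(M); A) \to Z^1(\pi_1(\S); A)$ is a bijection; and (ii) $\bar\phi^*[e] = 0 \Leftrightarrow \bar\Phi^*[e] = 0$.

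Both claims follow by comparing the Lyndon--Hochschild--Serre spectral sequences for $1 \to K \to \pi_1(M) \to T \to 1$ and $1 \to K_\S \to \pi_1(\S) \to T \to 1$ (where $K, K_\S$ are the respective kernels; the $H_0$-content of Theorem B, applicable because Property~(M) forces the relevant twisted Alexander polynomial to be non-zero, ensures $\bar\Phi(\pi_1(M)) = \bar\phi(\pi_1(\S))$, so we legitimately work with the same quotient $T$). Since $K$ and $K_\S$ act trivially on $A$, we have $H^1(K; A) = \hom(H_1(K; \Z), A)$, and Shapiro's lemma gives $T$-equivariant identifications $H_1(K; \Z) = H_1(M; \Z[T])$ and $H_1(K_\S; \Z) = H_1(\S; \Z[T])$. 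Hypothesis $\stateh(n)$ then yields a $T$-equivariant isomorphism $H^1(K; A) \cong H^1(K_\S; A)$. Applying the five-lemma to the inflation--restriction sequences
\[
0 \to H^1(T; A) \to H^1(\Gamma; A) \to H^1(K_\Gamma; A)^T \to H^2(T; A),
\]
whose first and fourth terms depend only on $T$ and $A$, produces an isomorphism $H^1(\pi_1(M); A) \cong H^1(\pi_1(\S); A)$. Together with the matching of invariants $A^{\pi_1(M)} = A^{\pi_1(\S)}$ (from Theorem B) and the short exact sequence $0 \to A/A^\Gamma \to Z^1(\Gamma; A) \to H^1(\Gamma; A) \to 0$, this establishes (i). For (ii), naturality of the transgression $d_2^{0,1}: H^1(K; A)^T \to H^2(T; A)$ combined with the $H^1$-isomorphism, and the fact that $E_2^{2,0} = H^2(T; A)$ is the same term in both spectral sequences, shows that $\im(d_2^{0,1})$ is the same subgroup of $H^2(T; A)$ for both $\pi_1(M)$ and $\pi_1(\S)$; since this image coincides with the kernel of inflation $H^2(T; A) \to H^2(\Gamma; A)$, claim (ii) follows.

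The hardest step in this plan is verifying that $\iota_\pm$ induces a genuine morphism between the two LHS spectral sequences, fully $T$-equivariant at each page, so that the five-lemma and the transgression naturality can be legitimately invoked; this requires careful bookkeeping of the module structures coming from $\bar\Phi$ versus $\bar\phi$ and the compatibility forced by Theorem~B. Once this naturality is in place, the above formal arguments produce bijectivity on every fiber, completing the inductive step from $\statefs(n)$ to $\statefs(n+1)$.
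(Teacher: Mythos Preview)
Your argument is essentially correct and reaches the same conclusion, but it takes a more cohomological route than the paper's own proof. The paper argues directly with the short exact sequence
\[
0 \;\longrightarrow\; H_1(G;\Z[S]) \;\longrightarrow\; G/[\ker\alpha,\ker\alpha] \;\longrightarrow\; S \;\longrightarrow\; 1
\]
attached to any epimorphism $\alpha:G\to S$ with $\ell(S)\le n$. Applying this to $G=\pi_1(\S)$ and $G=\pi_1(M)$ with matching $\alpha$'s (available by $\statefs(n)$), one gets a morphism of short exact sequences whose outer terms are isomorphisms by $\stateh(n)$ and the identity on $S$; the five--lemma for groups then shows that $\iota_\pm$ induces an isomorphism $\pi_1(\S)/[\ker\alpha_\S,\ker\alpha_\S]\xrightarrow{\cong}\pi_1(M)/[\ker\alpha_M,\ker\alpha_M]$, and every map to a solvable group of length $\le n+1$ factors through such a quotient. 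This is the same underlying principle you exploit, but packaged group--theoretically rather than through obstruction classes, $Z^1$--torsors and the inflation--restriction sequence. Your approach buys a cleaner conceptual separation into ``existence of a lift'' ($H^2$--obstruction) and ``count of lifts'' ($Z^1$--torsor); the paper's approach buys brevity and avoids any spectral--sequence bookkeeping.

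One genuine wrinkle in your write--up: you justify $\bar\Phi(\pi_1(M))=\bar\phi(\pi_1(\S))$ by appealing to Theorem~B and Property~(M). But Proposition~\ref{prop:ind1} only assumes $\stateh(n)$ and $\statefs(n)$; Property~(M) is not part of its hypotheses. The equality of images you need in fact follows purely from $\statefs(n)$: if $T'=\im(\bar\phi)\subset T$, then $\bar\phi$ factors through $T'$, and by the bijection $\hom(\pi_1(M),T')\cong\hom(\pi_1(\S),T')$ there is some $\Psi:\pi_1(M)\to T'$ with $\Psi\circ\iota_\pm=\bar\phi$; composing with the inclusion $T'\hookrightarrow T$ and using injectivity of $\hom(\pi_1(M),T)\to\hom(\pi_1(\S),T)$ forces $\Psi=\bar\Phi$, hence $\im(\bar\Phi)\subset T'$. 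This repairs the argument without changing its structure. Finally, the naturality of the Lyndon--Hochschild--Serre spectral sequence under the morphism of extensions induced by $\iota_\pm$ is entirely standard, so the step you flag as ``hardest'' is in fact routine; the substantive input is exactly $\stateh(n)$, which you use correctly.
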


\begin{proof}
Let $G$ be a group and $\a:G\to S$ an epimorphism onto a solvable group of derived length $n$. Then we obtain the following short exact sequence
\[ 0\to H_1(G;\Z[S])\to G/[\ker(\a),\ker(\a)]\xrightarrow{\a} S\to 1.\]
In particular if we can control solvable quotients of derived length at most $n$ and the corresponding first homology groups, then we can control solvable information on $G$ up to length $n+1$.
The proposition now follows from elaborating this principle.
We refer to \cite[Section~3.3]{FV08c} for the full details.
\end{proof}

\begin{proposition}\cite[Proposition~3.4]{FV08c} \label{prop:ind2}
Assume that $\Delta_{N,\phi}^\a$ has Property (M) for any epimorphism $\a:\pi_1(N)\to S$ onto a finite solvable group $S$ with $\ell(S)\leq n+1$.
If $\statefs(n)$ holds, then $\stateh(n)$ holds as well.
\end{proposition}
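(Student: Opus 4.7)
The plan is to use $\statefs(n)$ to package a given epimorphism $\b:\pi_1(M)\to T$ (with $\ell(T)\leq n$) into an auxiliary epimorphism $\a:\pi_1(N)\to S$ onto a finite solvable group $S$ with $\ell(S)\leq n+1$, then invoke the hypothesis via Theorem B to obtain an isomorphism of $H_1$ with $\Z[S]$-coefficients, and finally descend to the desired statement with $\Z[T]$-coefficients via $\b$.

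For the construction of $\a$, note that $\statefs(n)$ makes both $\i_\pm^*:\hom(\pi_1(M),T)\to\hom(\pi_1(\S),T)$ into bijections, so $F:=(\i_-^*)^{-1}\circ \i_+^*$ is a self-bijection of the finite set $\hom(\pi_1(M),T)$. Iterating $F$ starting from $\b_0:=\b$ produces a cycle $\b_0,\b_1,\dots,\b_{m-1}$ of some period $m$ satisfying $\b_{i+1}\circ \i_-=\b_i\circ \i_+$ (indices mod $m$). Present $\pi_1(N)=\langle \pi_1(M),t\mid t\i_+(\g)t^{-1}=\i_-(\g)\rangle$ as an HNN extension and let $S:=T\wr \Z/m\Z=T^m\rtimes \Z/m\Z$. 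I define $\a:\pi_1(N)\to S$ by $\a(g):=(\b_0(g),\dots,\b_{m-1}(g))$ for $g\in\pi_1(M)$ and $\a(t):=\sigma$, where $\sigma$ is the generator of $\Z/m\Z$ acting on $T^m$ by cyclic shift. A direct check using the compatibility relations among the $\b_i$ shows that $\a$ respects the HNN relation, and the image $\a(\pi_1(N))$ is solvable of derived length at most $\ell(T)+1\leq n+1$. The hypothesis then gives Property (M) for $\Delta_{N,\phi}^\a$, and Theorem B yields that $\i_\pm:H_1(\S;\Z[S])\to H_1(M;\Z[S])$ are isomorphisms.

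It remains to descend from $\Z[S]$- to $\Z[T]$-coefficients. The natural tool is Shapiro's lemma applied to the $m$-fold cyclic cover $N_m\to N$ determined by $\phi\bmod m$: since $\a(\pi_1(N_m))\subset T^m$, the computation on $N$ with $\Z[S]$-coefficients translates into one on $N_m$ with $\Z[T^m]$-coefficients, and the preimage of $M$ in $N_m$ is a disjoint union of $m$ copies of $M$ on which the $T^m$-valued representations are cyclic shifts of $(\b_0,\dots,\b_{m-1})$; analogously for the preimage of $\S$. Using the ring decomposition $\Z[T^m]\cong \Z[T]^{\otimes m}$ together with projection onto the zeroth tensor factor, one isolates the $\b$-twisted piece on the copy $M_0$.

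I expect this descent to be the main technical obstacle. The projection $\Z[T^m]\to \Z[T]$ is $\Z[\pi_1(M)]$-linear, but it does not admit a $\Z[\pi_1(M)]$-linear section, so the $\Z[T]$-coefficient isomorphism cannot simply be read off summand by summand from the $\Z[S]$-coefficient one. The correct argument must exploit the isomorphism on all $m$ cyclic shifts of $(\b_0,\dots,\b_{m-1})$ simultaneously, presumably via a careful Mayer--Vietoris chase for $N_m$ with $\Z[T^m]$-coefficients in the spirit of Lemma \ref{lem:untwistedh1}, using the periodicity of the $\b_i$ to pin down each $\Z[T]_{\b_i}$ factor.
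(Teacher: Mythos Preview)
Your overall strategy matches the paper's, but the construction of $\a$ differs. You take the monodromy orbit $\b_0,\dots,\b_{m-1}$ of $\b$ under the permutation $(\i_-^*)^{-1}\circ\i_+^*$ of $\Hom(\pi_1(M),T)$ and map into the wreath product $T\wr\Z/m\Z$. The paper instead passes to the characteristic quotient $B/B(T):=\pi_1(M)\big/\bigcap_{\g\in\Hom(\pi_1(M),T)}\ker\g$; using $\statefs(n)$ it shows (via \cite[Lemma~3.6]{FV08c}) that $\i_\pm:A/A(T)\to B/B(T)$ are isomorphisms, so the HNN extension collapses to a genuine semidirect product and one obtains $\a:\pi_1(N)\to\Z/k\ltimes B/B(T)$. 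Both constructions achieve the key property $\ker(\a|_{\pi_1(M)})\subset\ker\b$, and both therefore reduce to the same descent problem. The paper's choice is more canonical --- it absorbs all homomorphisms to $T$ at once rather than just the orbit of $\b$ --- which makes the subsequent module bookkeeping cleaner; yours is more explicit and avoids the auxiliary Lemma~3.6.

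For the descent from $\Z[S]$-- to $\Z[T]$--coefficients, the paper's outline is no more complete than yours: it simply records $\ker(\a)\subset\ker(\b)$ and defers the ``few technical arguments'' to \cite[Section~3.4]{FV08c}. Your proposed route through the cyclic cover $N_m$, Shapiro's lemma, and the tensor decomposition $\Z[T^m]\cong\Z[T]^{\otimes m}$ is chasing the same content, and you correctly flag that the projection has no $\Z[\pi_1(M)]$--linear section, so a naive summand argument fails. Neither sketch resolves this; the actual argument lives in the detailed paper. One minor correction: Theorem~B applies to the \emph{image} of $\a$, which may be a proper subgroup of $T\wr\Z/m\Z$; this is harmless for solvability and derived length, but it means your later analysis should be phrased in terms of $\a(\pi_1(M))$ rather than all of $T^m$.
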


\begin{proof}
In this proof we  find it convenient to introduce the notation
 $\pi=\pi_1(N)$, $A=\pi_1(\S)$ and $B=\pi_1(M)$. Recall that we can view $A$ and $B$ as subgroups of $\pi$.
In fact we can view $\pi$ as an HNN extension of $B$ by $A$, more precisely we have a canonical isomorphism
\[ \pi=\ll B,t \, |\, t\i_-(g)t^{-1}=\i_+(g), g\in A\rr.\]
As usual we will normally just write $\pi=\ll B,t \, |, ti_-(A)t^{-1}=\i_+(A)\rr$.
Let  $\b:B\to T$ be an epimorphism where $T$ is finite solvable with $\ell(T)\leq n$.
Given a finitely generated group $C$ we now define
\[ C(T)=\bigcap\limits_{\g\in Hom(C,T)}\ker(\g).\]
It is straightforward to see that $C/C(T)$ is a finite solvable group with $\ell(C/C(T))\leq n$ (see \cite[Lemma~3.6]{FV08c}).
It is a consequence of $\statefs(n)$ that the homomorphisms
\be \label{equ:aatbbt} \i_\pm: A/A(T) \to B/B(T) \ee
are in fact isomorphisms (see \cite[Lemma~3.6]{FV08c}). In particular we can define an epimorphism
\[ \pi=\ll B,t \, |, t\i_-(A)t^{-1}=\i_+(A)\rr \to   \ll B/B(T),t \, |\,  t\i_-(A/A(T))t^{-1}=\i_+(A/A(T))\rr.\]
It is a consequence of (\ref{equ:aatbbt}) that the above group is in fact a semidirect product, i.e. we have
an isomorphism
\[ \ll B/B(T),t \, |\,  t\i_-(A/A(T))t^{-1}=\i_+(A/A(T))\rr \cong \Z\ltimes B/B(T),\]
where $1\in \Z$ acts on $B/B(T)$ via $\i_-\circ \i_+^{-1}$. Since $B/B(T)$ is finite this automorphism
has finite order, say $k$, therefore there exists an epimorphism
\[ \a:\pi\to \Z\ltimes B/B(T)\to \Z/k\ltimes B/B(T)=:S.\]
Note that $S$ is a finite solvable group of length $n+1$.
It now follows from our assumption that the twisted Alexander polynomial $\Delta_{N,\phi}^\a$ has Property (M).
The information coming from Theorem B is not quite what we wanted, since we replaced $\b:B\to T$ by $\a:B\to S$.
But since $\ker(\a)\subset \ker(\b)$, the latter homomorphism contains in fact the information coming from $\b$,
using a few technical arguments we can now deduce
that
 \[ \i_{\pm} :H_1(A;\Z[T])\to H_1(B;\Z[T]) \]
 is an isomorphism as well.
We refer to \cite[Section~3.4]{FV08c} for the full details.
\end{proof}

Theorem C is now an immediate consequence of Propositions \ref{prop:ind1} and \ref{prop:ind2} and of the fact, observed above, that $\Delta_{N,\phi}$ having Property (M) implies that $\statefs(1)$ holds.
\\

 %=====================================
\subsection{Step D: Residually finite solvable fundamental groups}\label{section:step5}
The conclusion of Theorem C loosely says that the maps $\i_\pm$ `look like isomorphisms on the finite solvable level'
if all $\Delta_{N,\phi}^\a$ have Property (M).
With the methods from the previous section  Theorem C is the maximum information on the map $\i_\pm:\pi_1(\S)\to \pi_1(M)$ we can obtain from twisted Alexander polynomials.

In order to analyze the content  of the conclusion of Theorem C we need the following definition.
Let $\PP$ be a property of groups (e.g. finite, finite solvable), then we say that a group $\pi$ is \emph{residually $\PP$} if for any non-trivial $g\in \PP$ there exists a homomorphism $\a:\pi\to G$ to a
group $G$ with Property $\PP$ such that $\a(g)$ is non-trivial. For example it is well-known that surface groups are residually finite solvable, and that 3-manifold groups
are residually finite (cf. \cite{Th82} and \cite{He87}).

On the other hand 3-manifold groups are in general not residually finite solvable. For example if $K$ is a non-trivial knot with Alexander polynomial equal to one,
then standard arguments show that any homomorphism $\pi_1(S^3\sm \nu K)\to S$ to a solvable group $S$ necessarily factors through the abelianization
$\pi_1(S^3\sm \nu K)\to \Z$. In particular $\pi_1(S^3\sm \nu K)$ is not residually finite solvable.
One can use such a knot to construct a manifold pair $(N,\phi)$
where $\i_\pm:\hom(\pi_1(\S),S)\to \hom(\pi_1(M),S)$ is a bijection for any solvable $S$,
but such that $\pi_1(M)$ is not residually solvable. In particular $M$ is not a product.

%In our context such an example can be given as follows.
%Let $\S$ be a closed surface and let $c$ be a homologically essential simple closed curve in the interior of $\S \times [0,1]$, replace a tubular neighborhood of $c$ by the exterior  of a knot $K\subset S^3$ in such a way that a meridian of $c$ gets mapped to a longitude of $K$
%and a longitude of $c$ gets mapped to a meridian of $K$. We thus obtain a new manifold $M_K$ which contains two copies of $\S$ in the boundary.
%
%Now suppose that $K$ is a non-trivial knot with Alexander polynomial equal to one.
%Note that $\pi_1(M_K)$ contains $\pi_1(S^3\sm \nu K)$ as a subgroup, in particular $\pi_1(M_K)$ is not residually finite solvable and
%$M_K$ is not a product on $\S$. One can also easily show that
%\[ \i_\pm^*:\hom(\pi_1(M),S)\to \hom(\pi_1(\S),S)\]
%are bijections for any finite solvable group $S$, but as we saw above, $\i_\pm:\pi_1(\S)\to \pi_1(M)$ are not isomorphisms.

This discussion shows that the conclusion of Theorem C is not strong enough to ensure that $\i_\pm:\pi_1(\S)\to \pi_1(M)$ are isomorphisms,
the problem being that 3-manifold groups are in general not residually finite solvable.

Before we continue we need to introduce a few more notions.
We say a group has a property \emph{virtually}, if there exists a  finite index subgroup  which has this property.
Also recall, that given a prime $p$ a \emph{$p$--group} is a group whose order is  a power of $p$.
If $G$ is a group which is residually a $p$--group, then we will normally just say \emph{$G$ is residually $p$}.
%In particular note that a group which is residually $p$ is also residually finite solvable.

We can now formulate the following recent theorem of Matthias Aschenbrenner and the first author.

\begin{theorem}\cite{AF10}\label{thm:af09}
Let $N$ be a 3-manifold. Then for almost all primes $p$ the group $\pi_1(N)$ is virtually residually $p$.
\end{theorem}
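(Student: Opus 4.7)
The plan is to reduce to geometric pieces via Geometrization, establish the statement for each geometric piece using linear group theory, and then combine along the JSJ tori using a graph--of--groups argument. The main obstacle will be the combination step.

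First, I would reduce to prime, orientable 3--manifolds. The Kneser--Milnor decomposition expresses $\pi_1(N)$ as a free product of the fundamental groups of the prime summands (together with possibly free factors), and free products of virtually residually $p$ groups are virtually residually $p$; passing to the orientation double cover handles non--orientability. By Thurston's Geometrization Theorem (a theorem of Perelman), a prime orientable $N$ with toroidal or empty boundary is either geometric or has a non--trivial JSJ decomposition into geometric pieces glued along $\Z^2$ edge groups.

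Second, for each geometric piece I would exhibit $\pi_1(N)$ as a finitely generated linear group over a field of characteristic zero: hyperbolic pieces embed in $\mbox{PSL}(2,\C)$ (and even in $\mbox{PSL}(2,K)$ for a number field $K$, via the trace field and Mostow rigidity); Seifert fibered pieces are virtually central extensions of a surface group by $\Z$ and hence linear; and $\mbox{Sol}$, $\mbox{Nil}$ and Euclidean pieces are virtually polycyclic and hence linear over $\Z$. Then I would invoke the classical theorem of Platonov (building on Mal'cev) that a finitely generated linear group over a field of characteristic zero is virtually residually $p$ for all but finitely many primes $p$: one chooses a faithful representation over a suitable ring of $S$--integers and reduces modulo prime ideals above $p$, and the excluded primes are those dividing the denominators $S$ together with the finitely many bad primes of the specialization.

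Third, in the non--geometric case write $\pi_1(N)$ as the fundamental group of a finite graph of groups $\scrg$ with geometric vertex groups and edge groups $\Z^2$. Exclude the finite union of bad primes coming from the individual vertex groups. For a remaining prime $p$, each vertex group admits, after passing to a finite--index subgroup, a cofinal tower of finite--index normal subgroups with $p$--group quotients. The task is to choose these towers \emph{compatibly along the edges}: one needs $p$--group quotients of the vertex groups whose restrictions to each incident edge $\Z^2$ agree, so that they assemble, via Bass--Serre theory, into a single finite $p$--group quotient of a finite--index subgroup of $\pi_1(N)$. Iterating yields the cofinal tower witnessing the virtual residual $p$--ness of $\pi_1(N)$.

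The main obstacle is exactly this combination step. The difficulty is combinatorial: restricting a finite $p$--group quotient of a vertex group to a boundary torus $\Z^2$ singles out some open subgroup in the pro--$p$ topology, and for the assembly to produce a well--defined graph--of--groups quotient the open subgroups picked out from the two sides of each JSJ torus must coincide. Because $\Z^2$ is abelian of rank two and its pro--$p$ topology is transparent (open subgroups are, up to change of basis, of the form $p^k\Z\oplus p^l\Z$), compatibility can be forced by descending far enough in the $p$--power filtrations on each vertex group; arranging this uniformly across the whole graph, while avoiding the finitely many bad primes, is the technical heart of \cite{AF10}.
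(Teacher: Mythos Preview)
Your outline is essentially correct and in fact goes well beyond what the paper proves. The paper explicitly restricts itself to the hyperbolic case, giving only the classical argument that a finitely generated linear group is virtually residually $p$: embed $\pi_1(N)\subset\mbox{SL}(2,\C)$, find a finitely generated subring $R$ with $\pi_1(N)\subset\gl(n,R)$, pick for almost all $p$ a maximal ideal $\mm$ of residue characteristic $p$, and use the congruence filtration $\pi_k=\ker(\pi\to\gl(n,R/\mm^k))$ together with the Krull Intersection Theorem to see that $\pi_1$ is residually $p$. Your second step is exactly this argument (you attribute it to Platonov/Mal'cev; the paper cites Wehrfritz), so on the part the paper actually treats, the two approaches coincide.

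Where you differ is that you sketch the full proof of the cited result \cite{AF10}: the reduction to prime manifolds via Kneser--Milnor and the behavior of virtual residual $p$--ness under free products, the linearity of the remaining geometric pieces, and --- most importantly --- the graph--of--groups assembly along the JSJ tori. The paper does not attempt this combination step at all; it simply cites \cite{AF10} for the general case and remarks that its availability replaces an earlier, more convoluted piecewise argument. Your identification of the compatibility of $p$--power filtrations across the $\Z^2$ edge groups as the technical heart is accurate and is indeed the main content of \cite{AF10}. In short: your proposal is a faithful outline of the full theorem, whereas the paper's own proof is only the linear/hyperbolic special case embedded in your step two.
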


Recall that $p$-groups are finite solvable, in particular Theorem \ref{thm:af09} says that 3-manifold groups
are virtually residually finite solvable.

If $N$ is hyperbolic then the theorem is a consequence of the fact that linear groups are virtually residually $p$ (cf. e.g. \cite[Theorem~4.7]{We73}).
The proof of that fact is so short and elegant that we think it is worthwhile mentioning.

\begin{proof}[Proof of Theorem \ref{thm:af09} for hyperbolic $N$]
We write $\pi=\pi_1(N)$. Since we assume that $N$ is hyperbolic we can assume  that $\pi$ is  a subgroup of $SL(2,\C)$.
Since $\pi$ is finitely generated there exists a finitely generated subring $R$ of $\C$ such that $\pi\subset \gl(n,R)$.
It is well--known that for almost all primes $p$ there exists a maximal ideal $\mm$ of $R$
with $\mbox{char}(R/\mm)=p$ (see  \cite[p.~376f]{LS03}).

Now let $p$ be a prime for which there exists a  maximal ideal $\mm$ of $R$ with $\mbox{char}(R/\mm)=p$.
We will show that $\pi$ is virtually residually $p$.
Before we continue note that  $R/\mm^k$ is a finite ring for any $k\geq 1$
and that  $\bigcap_{k=1}^\infty \mathfrak{m}^k=\{0\}$ by the Krull Intersection Theorem.
%(cf.~\cite[p.~55]{We73} or \cite[Window~7]{LS03}).
%Furthermore note that (See, e.g., \cite[Theorem~8.10]{Ma80}.)
For $k\geq 1$ we let
\[ \pi_{k} = \ker\left( \pi\to \gl(n,R)\to \gl(n,R/\mm^k)\right).\]
Each $\pi_k$ is a normal subgroup of $\pi$, of finite index, and clearly $\pi_{k+1}\subset \pi_{k}$ for every $k\geq 1$. Moreover $\bigcap_{k=1}^\infty \pi_{k}=\{1\}$ since $\bigcap_{k=1}^\infty \mathfrak{m}^k=\{0\}$.

We claim that $\pi_1$ is residually $p$. We will prove this by showing that
$\pi_1/\pi_k$ is a $p$-group for any $k$. This in turn follows from showing that
any non--trivial element in $\pi_k/\pi_{k+1}$ has order $p$.
In order to show this pick  $A \in \pi_{k}$.
%Since
%$$A\in \ker\left(\gl(n,R)\to \gl(n,R/\mm^k)\right)$$
By definition we can write $$A=\id +C\qquad\text{for some $n\times n$-matrix $C$ with entries in $\mm^k$.}$$
From $p\in \mm$ and $k\geq 1$ we get that
\[ \ba{rcl}  A^p=(\id+ C)^p &=&\id+pC+\frac{p(p-1)}{2}C^2+\dots+C^p\\
&=& \id+\text{(some $n\times n$-matrix with entries in $\mm^{k+1}$).}\ea \]
 Hence $A^p\in \pi_{k+1}$.
%A finitely generated group  $\pi$ is residually $p$
%if and only if there exists a filtration
%\[  \pi \supset \pi_0 \supset \pi_1 \supset \pi_2 \supset \dots \]
%such that \\
%(1) the intersection of the $\pi_i$ is trivial,\\
%(2) the subgroups $\pi_i\subset \pi$ are normal,\\
%(3) the index of $\pi_i\subset \pi$ is a power of $p$.
%In fact, since $\pi$ is finitely generated we can find a ring  $R$ finitely generated  over $\Z$ with $\pi \subset \gl(2,R)$. For almost all primes $p$ there exists
%a maximal ideal $\mm \subset R$  with  $\mbox{char}(R/\mm)=p$.
%Let
%\[ \G_i:=\ker\{ \pi\to \gl(n,R)\to \gl(n,R/\mm^i)\}.\]
%The groups $\G_i$ are normal and of finite index and have the property  that $\cap \G_i$ is trivial.
%
%We now claim that  $\G_1$ is residually $p$. By the above it suffices to show that
%any $\G_i\subset \G_1$ has prime power index. In fact, it suffices to show that any element in $\G_i/\G_{i+1}, i\geq 1$
%has order $p$.
%So let $A\in \G_{i}$ with $i\geq 1$. We can write  $A=\id+B$ where $B\in \gl(n,\mm^i)$.  Then
%\[ A^p =(\id+B)^p =\id +pB +\frac{1}{2}p(p-1)B^2+\dots+pB^{p-1} +B^p.\]
%But since $p\in \mm$ and all entries of $B$ lie in $\mm^i, i\geq 1$ we see that all entries of
%$pB +\frac{1}{2}p(p-1)B^2\dots +pB^{p-1}+B^p$ lie in $\mm^{i+1}$.  This means that $A^p\equiv \id \mbox{ mod }\mm^{i+1}$, i.e. $A^p\in \G_{i+1}$.
\end{proof}

The combination of Theorem C and \ref{thm:af09} shows that proving Theorem \ref{thm:fv08}
becomes much easier, if  we can go to finite covers. Fortunately the  following lemma tells us that we can indeed do so:

\begin{lemma}\label{lem:finitecover}
Let $p:N'\to N$ be a finite cover and let $\phi'=p^*(\phi)$. Then the following hold:
 \bn
 \item $(N,\phi)$ fibers if and only if $(N',\phi')$ fibers,
% \item given $n\in \N$ the pair $(N,\phi)$ fibers if and only if $(N
\item if $\Delta_{N,\phi}^\a$ has Property (M) for any epimorphism $\a$ from $\pi_1(N)$ onto a finite group, then
$\Delta_{N,\phi}^\a$ has Property (M) for any epimorphism $\a$ from $\pi_1(N')$ onto a finite group.
\en
\end{lemma}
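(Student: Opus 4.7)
For Part~(1), the forward direction is immediate: a fibration $q\colon N\to S^1$ with $q_{*}=\phi$ composes with $p$ to give a fibration $q\circ p\colon N'\to S^1$ with $(q\circ p)_{*}=\phi\circ p_{*}=p^{*}\phi=\phi'$. For the reverse direction I would invoke Stallings' fibering theorem and reduce to showing that $\ker\phi\leq\pi_{1}(N)$ is finitely generated. From the fibering of $(N',\phi')$ the subgroup $\ker\phi'=\ker\phi\cap\pi_{1}(N')$ is finitely generated, and since $[\pi_{1}(N):\pi_{1}(N')]<\infty$ it has finite index in $\ker\phi$; a standard consequence of the Nielsen--Schreier theorem then forces $\ker\phi$ itself to be finitely generated.

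For Part~(2), my strategy would be to pair each $\a'$ with a suitable $\a$ via a normal core. Given an epimorphism $\a'\colon\pi_{1}(N')\to G'$ onto a finite group, let $K=\ker\a'$ and let $K_{0}=\bigcap_{g\in\pi_{1}(N)}gKg^{-1}$ be the normal core of $K$ in $\pi_{1}(N)$. Then $K_{0}$ is a normal, finite-index subgroup of $\pi_{1}(N)$ contained in $K$, and the quotient $\a\colon\pi_{1}(N)\to G:=\pi_{1}(N)/K_{0}$ is an epimorphism onto a finite group. By Shapiro's lemma, $\Delta_{N,\phi}^{\a}$ agrees up to units in $\zt$ with the ordinary Alexander polynomial $\Delta_{\ti N,\ti\phi}$ of the regular cover $\ti N\to N$ corresponding to $K_{0}$; similarly $\Delta_{N',\phi'}^{\a'}$ agrees with $\Delta_{\ti N_{1}',\ti{\phi'}}$ where $\ti N_{1}'\to N'$ corresponds to $K$. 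The inclusion $K_{0}\subset K$ makes $\ti N\to\ti N_{1}'$ a regular finite cover with deck group $K/K_{0}$.

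To conclude, I would factor the top Alexander polynomial and match degrees. Applying Shapiro once more to $\ti N\to\ti N_{1}'$, the module $H_{1}(\ti N;\Q[t^{\pm 1}])$ decomposes as a direct sum indexed by the irreducible $\Q$-representations of $K/K_{0}$, so up to units in $\Q[t^{\pm 1}]$ the polynomial $\Delta_{\ti N,\ti\phi}$ factors as the product of $\Delta_{\ti N_{1}',\ti{\phi'}}$ (the piece from the trivial representation) and a complementary polynomial. Gabai's multiplicativity of the Thurston norm under finite covers, together with the equality $b_{3}(\ti N)=b_{3}(N)$, converts Property~(M) for $\Delta_{N,\phi}^{\a}$ into Property~(M) for the untwisted $\Delta_{\ti N,\ti\phi}$; in particular that product is monic in $\zt$, so by Gauss's lemma every factor is monic, and hence $\Delta_{N',\phi'}^{\a'}$ is monic. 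For the degree equality I would invoke a McMullen-type upper bound on each factor separately and observe that the total degree prescribed on top forces each individual bound to be attained, yielding Property~(M) at $N'$.

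The main obstacle I expect is the degree bookkeeping: the divisibility terms $\div(\phi|_{K_{0}})$ and $\div(\phi|_{K})$ need not coincide, and the $(1+b_{3})$-contributions must be tracked carefully through the tower $\ti N\to\ti N_{1}'\to N'\to N$, again leveraging Gabai's theorem to reconcile Thurston-norm contributions at the various levels and to ensure that the inequalities applied to the individual factors assemble into the required equality at $N'$.
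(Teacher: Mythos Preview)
For Part~(1) your argument matches the paper's: both invoke Stallings' criterion and the fact that $\ker\phi'=\ker\phi\cap\pi_1(N')$ has finite index in $\ker\phi$, so one is finitely generated exactly when the other is. (A small terminological point: this last implication is not Nielsen--Schreier but the elementary fact that a group with a finitely generated finite-index subgroup is itself finitely generated---a finite transversal together with generators of the subgroup generate the whole group.)

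For Part~(2) the paper gives essentially the same slogan you do, namely Shapiro's lemma, and then defers all details to \cite[Lemma~7.6]{FV08c}, warning that the argument is ``a little delicate''. Your normal-core construction and the identification of the twisted polynomials with untwisted polynomials of covers is the right setup and matches what the paper indicates. However, your monicness step contains a genuine gap. The factorization $\Delta_{\ti N,\ti\phi}\doteq\Delta_{\ti N_{1}',\ti{\phi'}}\cdot q(t)$ you obtain from the decomposition of $\Q[K/K_0]$ holds only up to a unit in $\Q[t^{\pm 1}]$. Gauss's lemma then says the monic polynomial $\Delta_{\ti N,\ti\phi}\in\zt$ factors as a product of \emph{primitive} integer polynomials, each necessarily monic; but $\Delta_{\ti N_{1}',\ti{\phi'}}$ is a fixed element of $\zt$ (the order of an integral $\zt$--module), and you have not shown it is primitive. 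If its content is $c>1$ then it equals $c$ times a monic polynomial and is not itself monic, yet it still divides a monic polynomial over $\Q$---for instance $2(t-1)$ divides $t^2-1$ in $\Q[t^{\pm1}]$. Integral Alexander polynomials of $3$--manifolds can have nontrivial content, so this is not a phantom obstruction; one way around it is to work with the integral augmentation sequence $0\to I\to\Z[K/K_0]\to\Z\to 0$ and the resulting long exact sequence of $\zt$--modules rather than decomposing over $\Q$. You correctly anticipate that the divisibility bookkeeping in the degree formula is delicate; the monicness step is equally so, and both are exactly why the paper defers to \cite{FV08c}.
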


\begin{proof}
 The first statement can for example be proved  using Stallings' fibering theorem \cite{St62}:
Indeed, $(N,\phi)$ fibers if and only if $\ker(\phi)$ is finitely generated and
$(N',\phi')$ fibers if and only if $\ker(\phi')$ is finitely generated. But $\ker(\phi')$ is subgroup of $\ker(\phi)$ of finite index. In particular if one is finitely generated, then so is the other.

The second statement is fundamentally just an application of Shapiro's lemma, which says that the homology of a finite cover of a space $N$ is nothing but the twisted homology of $N$. Making this principle work in this context is a little delicate though, and we refer the reader to \cite[Lemma~7.6]{FV08c} for the details.
\end{proof}

The following is now an immediate corollary to Theorem \ref{thm:af09} and Lemma \ref{lem:finitecover}.
\\

\noindent \textbf{Theorem D.}
\emph{Suppose the conclusion of Theorem \ref{thm:fv08} holds for all 3-manifolds such that $\pi_1(N)$ is residually finite solvable,
then Theorem \ref{thm:fv08} holds for all 3-manifolds.
}\\

Note that the original proof of Theorem \ref{thm:fv08} (that appeared before the first author and M. Aschenbrenner completed the proof of Theorem \ref{thm:af09}) required in \cite[Section~6]{FV08c} a rather convoluted argument based on the study of residual properties of each piece of the JSJ decomposition of $N$. The result of Theorem \ref{thm:af09} therefore greatly simplifies the argument.

%Theorem \ref{thm:af09} was proved after we wrote down the proof of Theorem \ref{thm:fv08}
%In \cite{FV08c} we therefore only use the well-known conclusion of Theorem \ref{thm:af09} in the case that $N$ is hyperbolic.
%In the case that $N$ is not hyperbolic we show in \cite[Section~5]{FV08c} that there exists a finite cover of $N$ such that the fundamental group of any JSJ piece
%is residually $p$. In \cite[Section~6]{FV08c} we then give an elaborate argument which allows us to deal with each JSJ piece separately.
%This section of \cite{FV08c} should be avoided by all except the most dedicated readers.

 %=====================================
\subsection{Step E. Reformulation in terms of sutured manifolds and Agol's theorem}\label{section:step6}

In our final step we find it convenient to switch to the language of sutured manifolds.
A \emph{sutured manifold} is a triple $(M,\S_-,\S_+)$ where $M$ is an oriented 3-manifold, $\S_\pm$ are (possibly disconnected) disjoint oriented subsurfaces of $\partial M$  with the following properties:
\bn
\item the orientation of $\S_+$ agrees with  the orientation of $\partial M$,
\item the orientation of $\S_-$ is the opposite orientation of $\partial M$,
\item the closure of $\partial M\sm \S_- \cup \S_+$ consists of a union of annuli $A_1,\dots,A_n$ such that
for any $i$ the boundary of $A_i$ consists of a boundary curve of $\S_-$ and of a boundary curve of $\S_+$.
Furthermore the boundary curves have to be oriented the same way.
\en
A  sutured manifold  $(M,\S_-,\S_+)$ is called \emph{taut} if $M$ is irreducible and if $\S_\pm$  are Thurston norm minimizing in their homology class in $H_2(M,\partial \S_\pm;\Z)$.
We refer to \cite{Ju06}, \cite[Definition~2.6]{Ga83} or \cite[p.~364]{CC03} for more on  sutured manifolds.

The following are the two most important types of examples for us:
\bn
\item If $\S$ is a  oriented surface, then $(\S\times [-1,1], -\S\times -1, \S\times 1)$ is a taut sutured manifold.
We will refer to it as a \emph{product sutured manifold}.
\item Let $N$ be an irreducible 3--manifold with empty or toroidal boundary. Let $\S$ be a Thurston norm minimizing surface  which intersects all boundary tori of $N$.
Denote by $M$ the result of cutting  $N$  along $\S$ and denote by  $\S_\pm$  the two copies of $\S$ in $M$. Then $(M,\S_-,\S_+)$ is a taut  sutured manifold.
\en

\noindent \textbf{Theorem E.}
\emph{
Let $(M,\S_-,\S_+)$ be a taut  sutured manifold.
Suppose that $\pi_1(M)$ is residually finite solvable and suppose that for any finite solvable group $S$ the induced maps
\[ \i^{*}_{\pm}:\hom(\pi_1(M),S)\to \hom(\pi_1(\S_\pm),S)\]
are bijections. Then $M$ is a product on $\S_\pm$.} \\

Theorem \ref{thm:fv08} is an immediate consequence of  Theorems A, C, D and E,
and  Theorem \ref{thm:fv08solv} is an immediate  consequence of Theorems A, C and E.
\\

Note that the statement of Theorem E can be generalized to a question about groups in general: Let $\PP$ be a property
of groups,
let $\varphi:A\to B$ be a homomorphism of finitely presented groups which are residually $\PP$ such that for any group $G$ with Property $\PP$ the map
$\hom(B,G)\to \hom(A,G)$ is a bijection. Does this imply that $\varphi$ is an isomorphism?
For  $\PP=\{\mbox{finite}\}$ this question goes back to Grothendieck \cite{Gr70} and was answered in the negative by Bridson and Grunewald \cite{BG04}.
We refer to  \cite{AHKS07} for more on the case $\PP=\{\mbox{finite  solvable}\}$.

This excursion into group theory shows that in order to prove Theorem E we can not rely on a miracle in group theory,
but we need a miracle which comes from our 3-dimensional setting.
This miracle is provided by a stunning theorem of Agol \cite{Ag08}.
To explain it we need one more definition.

A group $\pi$ is called \emph{residually finite $\Q$--solvable} or \emph{RFRS} if there
exists a filtration  of groups $\pi=\pi_0\supset \pi_1 \supset \pi_2\dots $
such that the following hold:
\bn
\item $\cap_i \pi_i=\{1\}$,
\item   $\pi_i$ is a normal, finite index subgroup of  $\pi$ for any $i$,
\item for any $i$ the map $\pi_i\to \pi_i/\pi_{i+1}$ factors through $\pi_i\to H_1(\pi_i;\Z)$,
\item for any $i$ the map $\pi_i\to \pi_i/\pi_{i+1}$ factors through $\pi_i\to H_1(\pi_i;\Z)/\mbox{torsion}$.
\en
Note that conditions (1), (2) and (3) are equivalent to saying that $\pi$ is residually finite solvable.
But condition (4) means that the RFRS condition is considerably more restrictive.
The notion of an RFRS group was introduced by Agol \cite{Ag08},
we refer to Agol's paper for more information on RFRS groups.
For our context it is important to note that free groups and surface groups are RFRS. Indeed, it is well-known that these groups are residually finite solvable, in particular there exists
a sequence $\pi_i$ with Properties (1), (2) and (3). But the extra condition (4) is now always satisfied since the first homology of any finite index subgroup of a free group or a surface group is always torsion free.

Given a sutured manifold $M=(M,\S_-,\S_+)$ the double $D_M$ is defined to be the double of $M$ along $\S_-$ and $\S_+$.
Note that the annuli $\partial M\sm (\S_-\cup \S_+)$ give rise to toroidal boundary components of $D_M$.
%We denote by $r:D_M_\g\to M$ the retraction map given by `folding' the two copies of $M$ along $R(\g)$.

%We are now in a position to state Agol's result.
The following theorem is   implicit in the proof of   \cite[Theorem~6.1]{Ag08}.

\begin{theorem}[Agol] \label{thm:agol2}
Let $M=(M,\S_-,\S_+)$ be a connected, taut sutured manifold which is not a product sutured manifold.
Suppose that  $\pi_1(M)$ is RFRS.
% and such that $D_M_\g$ is irreducible.
%Write $W=D_M_\g$.
Then there exists an epimorphism $\a:\pi_1(M)\to S$ onto a finite solvable group,
such that the corresponding cover $\ti{M}=(\ti{M},\ti{\S}_-,\ti{\S}_+)$ of $M=(M,\S_-,\S_+)$
has the property that the class $[\ti{\S}_-]\in H_2(D_{\ti{M}},\partial D_{\ti{M}};\Z)$
lies on the closure of the cone over a fibered face
of the Thurston norm ball of  $D_{\ti{M}}$.
\end{theorem}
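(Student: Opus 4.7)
The plan is to combine Gabai's theory of taut sutured manifold hierarchies with the structural properties of the RFRS filtration, arguing by induction on Gabai's notion of sutured manifold complexity. The central mechanism is that clause (4) in the definition of RFRS---demanding that $\G_i\to \G_i/\G_{i+1}$ factor through $H_1(\G_i;\Z)/\text{torsion}$ rather than merely $H_1(\G_i;\Z)$---allows any properly embedded surface that is rationally null-homologous in an intermediate covering manifold to be \emph{promoted} to a homologically nontrivial surface in some finite solvable cover coming from the RFRS filtration.

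First I would invoke Gabai's theorem: since $(M,\S_-,\S_+)$ is taut and not a product, there exists a properly embedded oriented ``groomed'' surface $S\subset M$ whose sutured decomposition of $M$ along $S$ yields a new taut sutured manifold $M'$ of strictly smaller Gabai complexity. The obstruction to immediately improving $[\S_-]$ toward a fibered face of $D_M$ is that $[S]$ may vanish in $H_2(M,\partial M;\Q)$. To remove this obstruction, a standard covering space argument using Poincar\'e--Lefschetz duality produces a finite abelian cover of $M$ in which some component of the preimage of $S$ becomes homologically nontrivial; condition (4) of RFRS guarantees that this cover is realized as a finite solvable quotient $\pi_1(M)/\G_{k+1}$ inside the filtration. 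Pulling everything back to this cover $(\ti{M},\ti{\S}_-,\ti{\S}_+)$, the class $[\ti{\S}_-]+\epsilon[\ti{S}]$ for small rational $\epsilon>0$ pushes the lifted class into the interior of a new top-dimensional face of the Thurston norm ball of $D_{\ti{M}}$, and the representative obtained by cutting along a norm-minimizing surface in this class has strictly smaller Gabai complexity than $(\ti{M},\ti{\S}_-,\ti{\S}_+)$ itself.

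Iterating---alternately applying a Gabai decomposition and passing to a finite solvable cover within the RFRS filtration to make each successive decomposing surface homologically visible---the Gabai complexity strictly decreases and is bounded below, so the procedure terminates after finitely many steps in a finite solvable cover $\ti{M}$ that is a product sutured manifold on $\ti{\S}_\pm$. At that point $D_{\ti{M}}$ fibers over $S^1$ with fiber a norm-minimizing surface in the class of $\ti{\S}_-$, and by Thurston's fibered face theorem $[\ti{\S}_-]$ lies on the closure of the cone over a fibered face of the Thurston norm ball of $D_{\ti{M}}$.

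The main obstacle is bookkeeping through the induction: one must verify that at every stage the new cover preserves tautness of the sutured manifold structure, that the image of the original $\S_\pm$ remains Thurston norm minimizing in its homology class, and---most delicately---that the final $[\ti{\S}_-]$ lies on the \emph{closure} of a fibered face rather than slipping off into a non-fibered region of the norm ball. The torsion-free quotient condition in the RFRS definition is essential precisely here: it is the free part of $H_1$ that both detects the separating behaviour of null-homologous surfaces after finite solvable lifting and is compatible, via Thurston's cone structure, with keeping the inductively constructed class on the correct face throughout the process.
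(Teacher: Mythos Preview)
The paper does not prove this theorem. It is stated as a result ``implicit in the proof of \cite[Theorem~6.1]{Ag08}'' and is used as a black box in the proof of Theorem~E. So there is no proof in the paper to compare your proposal against.

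That said, your sketch is a reasonable approximation of Agol's actual argument in \cite{Ag08}: the interplay of Gabai's sutured manifold hierarchy with the RFRS filtration, and in particular the role of condition~(4) in promoting a rationally null-homologous decomposing surface to a homologically essential one in a further finite solvable cover, is indeed the engine of the proof. One point deserves correction, however. You write that the iteration terminates in a cover $\ti{M}$ which ``is a product sutured manifold on $\ti{\S}_\pm$,'' and hence that $[\ti{\S}_-]$ itself is a fibered class in $D_{\ti{M}}$. That is stronger than what the argument gives and stronger than the theorem claims. What actually happens is that at each stage you perturb the (lifted) class $[\ti{\S}_-]$ by a small rational multiple of the decomposing surface; it is the sutured manifold obtained by cutting $D_{\ti{M}}$ along a norm-minimizing representative of this \emph{perturbed} class whose complexity strictly drops. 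Termination means that some such perturbed class is fibered, and since the perturbations can be taken arbitrarily small, $[\ti{\S}_-]$ lies in the closure of the fibered cone. If $[\ti{\S}_-]$ itself were always fibered in the cover, the conclusion of the theorem would not need the word ``closure,'' and the subsequent proof of Theorem~E in the paper---which works hard precisely to rule out the case that $[\ti{\S}_-]$ sits on the boundary of the fibered face---would be unnecessary.
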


Before we delve into the details of the proof of Theorem E, let us take a step back and think about what Theorem \ref{thm:agol2} does for us.
Agol's theorem has as input information on finite solvable quotients of $\pi_1(M)$ and as output it gives us a strong topological conclusion.
This is exactly the type of statement we want to make in Theorem E. It is now just a matter of time till bending and twisting
turns Theorem \ref{thm:agol2}  into a proof of  Theorem E.

\begin{proof}[Proof of Theorem E]
Let $M=(M,\S_-,\S_+)$ be a taut  sutured manifold such that $\pi_1(M)$ is residually finite solvable and such  that for any finite solvable group $S$ the induced maps
\be \label{equ:samehom} \i^*:\hom(\pi_1(M),S)\to \hom(\pi_1(\S_\pm),S)\ee
are bijections. We have to show that $M$ is a product sutured manifold.

Let us now suppose that $M$ is not a product sutured manifold.  We  write $\S=\S_-$.
Recall that we pointed out above that the surface group $\pi_1(\S)$ is RFRS.
By assumption $\pi_1(M)$ is residually finite solvable  and by (\ref{equ:samehom}) the finite solvable quotients of $\pi_1(M)$ and $\pi_1(\S)$ `look the same'.
It is now fairly elementary to show that $\pi_1(M)$ is also RFRS (see \cite[Section~4.2]{FV08c} for details).

We can thus apply Theorem \ref{thm:agol2} to the taut sutured manifold $(M,\S_-,\S_+)$. In fact applying arguments similar to the ones used in Lemma \ref{lem:finitecover}
we can without loss of generality assume that
already the class $[{\S}_-]\in H_2(D_M,\partial D_M;\Z)$
lies on the closure of the cone over a fibered face $F$
of the Thurston norm ball of  $D_M$. Moreover, as by hypothesis $M$ is not a product, we can assume that $[{\S}_-]$ lies in the cone over the boundary of $F$, as otherwise $(N,\phi)$ would fiber already. We refer to \cite[Lemma~4.3]{FV08c} for details.

Note that $D_M$ has an obvious involution $r$ given by `reflection', i.e. interchanging the two copies of $M$.
Also recall that $(\ref{equ:samehom})$ implies in particular that the inclusion induced maps $H_1(\S_\pm;\Z)\to H_1(M;\Z)$ are isomorphisms.
This means that homologically $M$ looks like a product, and hence homologically $D_M$ looks like $S^1\times \S$.
More precisely, there exists a canonical isomorphism $\Z \cdot t\oplus H_1(\S_-;\Z)\xrightarrow{\cong} H_1(D_M;\Z)$,
where $t$ is an oriented curve with $r(t)=-t$ which intersects each of $\S_-$ and $\S_+$ once.
Note that the map $r:H_1(D_M;\Z)\to H_1(D_M;\Z)$ restricts to the identity on $H_1(\S;\Z)$ and sends $t$ to $-t$.

 Applying duality we now obtain a dual isomorphism
 $H_2(D_M,\partial D_M;\Z)=\Z \cdot [\S]\oplus V$ where $r$ acts as $-\id$ on $V$.
Recall that  $r([\S])=[\S]$ and that $[\S]$  sits on the boundary of the face $F$. It follows that $[\S]$ also sits on the boundary of the face $r(F)$.
Clearly $r(F)$ is also a fibered face, and since $r$ acts as $-\id$ on $V$ we see that $F$ and $r(F)$ are distinct faces.
 Also note that by the convexity of the Thurston norm ball $F$ and $r(F)$ can not sit on the same plane.
Schematically we now have the situation presented in Figure \ref{fig:thurston}.
\begin{figure}[h] \begin{center}
 \includegraphics[scale=0.3]{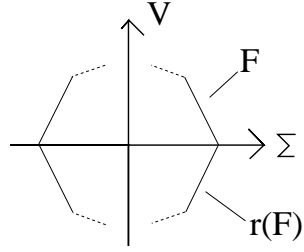} \caption{Thurston norm ball on $H_2(D_M,\partial D_M;\R)=\R\cdot \S\oplus V$.}
\label{fig:thurston}
\end{center}
 \end{figure}

We now consider the information contained in $(\ref{equ:samehom})$ coming from finite metabelian groups.
We see that  $M$ looks like a product on  the `metabelian level', and hence  $D_M$ looks like $S^1\times \S$ on the `metabelian level'.
Now recall that the multivariable Alexander polynomial of a 3-manifold is a metabelian invariant, we conclude that
the multivariable Alexander polynomial of $D_M$ equals the multivariable Alexander polynomial of $S^1\times \S$ which is
well-known to be given by $(1-t)^{-\chi(\S)}$, where $t\in H_1(S^1\times M;\Z)=H_1(D_M;\Z)$ is the same generator introduced above
(we refer to \cite[Lemma~4.9]{FV08c} for details).

The norm ball dual to the Newton polygon of the multivariable Alexander polynomial is called the Alexander norm ball (\cite{McM02}). Note that the Alexander norm ball of $D_M$ is a convex subset of $\hom(H_1(D_M;\Z);\R)=H^2(D_M,\partial D_M;\R)$.
Since the Alexander polynomial is given by $(1-t)^{-\chi(\S)}$ and since $\S$ is dual to $t$ we see that the Alexander norm ball
in our case is given by Figure \ref{fig:alexander}.
\begin{figure}[h] \begin{center}
 \includegraphics[scale=0.3]{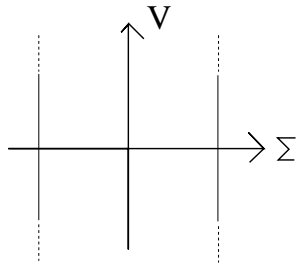} \caption{Alexander norm ball on $H_2(D_M,\partial D_M;\R)=\R\cdot \S\oplus V$.}
\label{fig:alexander}
\end{center}
 \end{figure}
 Note that for fibered classes the Thurston norm and the Alexander norm agree (see \cite{McM02}).
 In particular the distinct fibered faces $F$ and $r(F)$ of the Thurston norm ball have to lie on distinct faces of the Alexander norm ball. But the Alexander norm ball only has two faces, both preserved under reflection.
 This leads to a contradiction, which is schematically indicated by the mismatch of  Figure \ref{fig:thurston} and Figure \ref{fig:alexander}.
 This shows that the assumption that $M$ is not a product leads to a contradiction. We refer to \cite[Section~4]{FV08c} for a formal and completely rigorous version of the above argument.
\end{proof}

%We need Geometrization, Ni et al only to show that $N$ is irreducible.

%=====================================
\section{Vanishing twisted Alexander polynomials for non--fibered manifolds}\label{section:vr}

Throughout this section we use the notation from the previous section. In particular given a manifold pair $(N,\phi)$ where $\phi \in H^1(N,\Z)$ is a primitive class with $\Delta_{N,\phi}\ne 0$,
we will denote by $\S$ a connected Thurston norm minimizing surface dual to $\phi$ and we will write $M=N\sm \nu \S$. Furthermore we will  denote the two natural inclusion maps of $\S$ into $M$ by $\i_\pm$.
We recall the following theorem, whose proof we had outlined above:\\

\noindent \textbf{Theorem B.} \emph{
Let $(N,\phi)$ be a manifold pair and let $\a:\pi_1(N)\to G$ be an epimorphism onto a finite group such that
  $\Delta_{N,\phi}^\a\ne 0$, then
\[ \im\{\pi_1(\S)\xrightarrow{\i_\pm} \pi_1(M)\xrightarrow{\a} G\}=\im\{ \pi_1(M)\xrightarrow{\a} G\}.\]
}

We will see in this section that Theorem B can be used in many situations to show that a non-fibered manifold pair has zero twisted Alexander polynomials.

\begin{theorem}\cite[Theorem~4.2]{FV08b}\label{thm:sep}
Let $(N,\phi)$ be a non-fibered manifold pair such that $\phi$ is dual to a connected incompressible
  surface $\S$.
If $\pi_1(N)$ is LERF, then there exists an epimorphism $\a:\pi_1(N)\to G$ onto a finite group $G$ such that $\Delta_{N,\phi}^\a=0$.
\end{theorem}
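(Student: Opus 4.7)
The plan is to proceed by contradiction: I will assume that $\Delta_{N,\phi}^\a \ne 0$ for every epimorphism $\a : \pi_1(N) \to G$ onto a finite group, and derive that $(N,\phi)$ fibers, contradicting the hypothesis. As the setup, I will cut $N$ along the given connected incompressible surface $\S$ to obtain $M = N \sm \nu \S$ with the two natural inclusions $\iota_\pm : \pi_1(\S) \to \pi_1(M)$; incompressibility of $\S$ makes both $\iota_\pm$ injective and realizes $\pi_1(N)$ as an HNN extension with vertex group $\pi_1(M)$ and edge group $\pi_1(\S)$, so $\pi_1(\S)$ and $\pi_1(M)$ embed as subgroups of $\pi_1(N)$. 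I will then set $H := \iota_+(\pi_1(\S))$, a finitely generated subgroup of $\pi_1(N)$.

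The next step is to invoke Stallings' fibering theorem (Theorem~A(4) of the paper): since $(N,\phi)$ is non-fibered and the $\iota_\pm$ are already injective, at least one of them fails to be surjective. Without loss of generality I will take $H \subsetneq \pi_1(M)$ and pick some $g \in \pi_1(M) \sm H$. Now, because $\pi_1(N)$ is LERF and $H$ is finitely generated, $H$ will be separable in $\pi_1(N)$; applying separability to $g \in \pi_1(N) \sm H$ will produce an epimorphism $\a : \pi_1(N) \to G$ onto a finite group with $\a(g) \notin \a(H)$. Since $\a(g) \in \a(\pi_1(M))$, this will give $\a(H) \subsetneq \a(\pi_1(M))$, directly contradicting the conclusion of Theorem~B under the standing assumption. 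Hence some $\a$ must make $\Delta_{N,\phi}^\a$ vanish.

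The hard part I expect is not the main line of argument, which is short, but the auxiliary verification that Theorem~B can be invoked with the given $\S$---in Section~3 $\S$ was always assumed to be Thurston norm minimizing, whereas here it is only connected and incompressible. Inspecting the outline, the conclusion about images of $\iota_\pm$ relies only on the HNN Mayer--Vietoris sequence for $\pi_1(N)$, which is available for any connected two-sided incompressible dual surface, so this reduces to bookkeeping rather than any new difficulty. Beyond that, the proof is a clean combination of LERF, Stallings' theorem, and Theorem~B, with no deeper ingredient required.
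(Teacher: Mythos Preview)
Your proposal is correct and follows essentially the same route as the paper's proof: both use Stallings' criterion to get $\iota_\pm(\pi_1(\S))\subsetneq \pi_1(M)$ from non-fiberedness, invoke LERF to separate an element of $\pi_1(M)\setminus \iota_\pm(\pi_1(\S))$ in a finite quotient of $\pi_1(N)$, and then apply (the contrapositive of) Theorem~B. Your extra remark that the $H_0$-part of Theorem~B needs only the HNN Mayer--Vietoris sequence, and hence goes through for any connected incompressible dual surface rather than a Thurston norm minimizing one, is a valid and worthwhile clarification that the paper's proof leaves implicit.
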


\begin{proof}
We write $\S=\S_-$.
By Theorem A we know that the monomorphism  $\pi_1(\S)\to \pi_1(M)$ is not an isomorphism, in particular the set $\pi_1(M)\setminus \pi_1(\S)$ is nonempty. By the separability of $\pi_1(\Sigma)\subset \pi_1(N)$ we can now find for any $g \in \pi_1(M)\setminus \pi_1(\S)$ an epimorphism
$\a:\pi_1(N)\to G$ onto a finite group $G$ such that $\a(g)\not\in \a(\pi_1(\S))$. In particular we have
 \[ \im\{\pi_1(\S)\xrightarrow{\i} \pi_1(M)\xrightarrow{\a} G\}\subsetneq \im\{ \pi_1(M)\xrightarrow{\a} G\}.\]
 The theorem now follows  from Theorem B.
\end{proof}

It is an important open question whether fundamental groups of hyperbolic $3$--manifolds are LERF
(see \cite[Question~15]{Th82}), and various partial results of separability are known.
In particular, Long and Niblo \cite[Theorem~2]{LN91} showed that the subgroup carried by an embedded torus is separable.
It follows that  the separability condition required in the proof of Theorem \ref{thm:sep} is always satisfied if $\S$ is a torus.
We now easily obtain the following.

\begin{theorem}\cite[Proposition~4.6]{FV08b}\label{prop:torus}
Let $N$ be a closed 3-manifold and $\phi\in H^1(N;\Z)$ a non-trivial class with $||\phi||_T=0$.
Then $(N,\phi)$ is fibered if and only if for any  epimorphism $\a:\pi_1(N)\to G$ onto a finite group $G$ we have $\Delta_{N,\phi}^\a\ne 0$.
\end{theorem}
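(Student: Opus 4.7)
The ``only if'' direction is immediate from Lemma \ref{lem:alexfib}: if $(N,\phi)$ fibers then for every epimorphism $\a:\pi_1(N)\to G$ the twisted Alexander polynomial $\Delta_{N,\phi}^{\a}$ has Property (M), hence in particular is nonzero. So the content is the ``if'' direction, and the plan is to follow the blueprint of Theorem \ref{thm:sep} but replace the LERF hypothesis by the torus-separability theorem of Long--Niblo \cite{LN91}.

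First, I would perform the standard reduction to a primitive class, exactly as in Step A: since fibering, the Thurston norm, and the non-vanishing of $\Delta_{N,\phi}^{\a}$ (up to reparametrizing $t$) all behave compatibly under $\phi\mapsto k\phi$, we may assume $\phi\in H^1(N;\Z)$ is primitive. By Theorem A(3) the hypothesis that every $\Delta_{N,\phi}^{\a}$ is nonzero forces $N$ to be irreducible, and by Theorem A(1) there exists a connected Thurston norm minimizing surface $\S$ dual to $\phi$. Since $N$ is closed, Theorem A(2) ensures $\S$ is closed; combining $\chi_{-}(\S)=\|\phi\|_T=0$ with connectedness, $\S$ is either a $2$-sphere or a torus, and the sphere case is ruled out by irreducibility together with $\phi\neq 0$. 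Thus $\S$ is an embedded torus in $N$.

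Write $M=N\sm\nu\S$ and consider $\i_\pm:\pi_1(\S)\to \pi_1(M)$; by Dehn's lemma these maps are injective, and by Theorem A(4) it suffices to prove they are surjective. Fix a sign and set $H=\i_\pm(\pi_1(\S))\subset \pi_1(M)\subset \pi_1(N)$; this is the fundamental group of an embedded torus in the closed $3$-manifold $N$, so by the Long--Niblo separability theorem \cite[Theorem~2]{LN91}, $H$ is a separable subgroup of $\pi_1(N)$. Suppose for contradiction that $H\subsetneq \pi_1(M)$ and pick $g\in \pi_1(M)\sm H$. By separability there is an epimorphism $\a:\pi_1(N)\to G$ onto a finite group with $\a(g)\notin \a(H)$. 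Since $g\in \pi_1(M)$, this gives $\a(H)\subsetneq \a(\pi_1(M))$, i.e.\
\[\im\{\pi_1(\S)\xrightarrow{\i_\pm}\pi_1(M)\xrightarrow{\a} G\}\subsetneq \im\{\pi_1(M)\xrightarrow{\a} G\}.\]
By Theorem B, this forces $\Delta_{N,\phi}^{\a}=0$, contradicting the hypothesis. Hence $\i_\pm$ is surjective, so both $\i_+$ and $\i_-$ are isomorphisms, and $(N,\phi)$ fibers by Theorem A(4).

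The main obstacle is essentially bookkeeping rather than a genuine difficulty: one must verify that the ``norm zero plus connected minimizer'' setup actually produces an \emph{embedded} torus (as opposed to a sphere, a disjoint union, or a non-$\pi_1$-injective surface) so that Long--Niblo applies. The sphere case is handled by irreducibility coming from Theorem A(3), connectedness comes from Theorem A(1) after primitivizing $\phi$, and $\pi_1$-injectivity is automatic for a Thurston norm minimizing surface via Dehn's lemma. Once these reductions are in place, Theorem B converts the separability input of Long--Niblo directly into the required surjectivity of $\i_\pm$, and Stallings' fibration theorem (packaged in Theorem A(4)) closes the argument.
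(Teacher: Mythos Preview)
Your proposal is correct and follows exactly the approach indicated in the paper: the paper simply remarks that Long--Niblo's torus-separability supplies the separability input needed in the proof of Theorem~\ref{thm:sep}, and you have spelled this out, including the reductions (primitivity of $\phi$, irreducibility of $N$ via Theorem~A(3), and ruling out the sphere so that $\S$ is genuinely an incompressible embedded torus) that make Long--Niblo applicable.
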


Expanding on the ideas of Theorem \ref{thm:sep} and \ref{prop:torus} one can then continue to prove Theorem
 \ref{thm:sepintro}. We refer to \cite{FV08b} for details.

%Despite these success Theorem \ref{thm:sep}

Unfortunately  not all $3$-manifold groups are LERF (see \cite{NW01}) and little is known even conjecturally about the separability properties of non-geometric 3-manifold groups.
In the remainder of this section we will therefore give two examples of types of non-fibered manifold pairs  where Theorem \ref{thm:sep} cannot be applied,
but where the weaker assumptions of Theorem B allow us to show that these pairs have twisted Alexander polynomials which are zero.

In order to prove our theorems we recall the following result of Long and Niblo \cite{LN91}:
Let $\Sigma$ be an incompressible subsurface of the boundary of a $3$--manifold $M$. Then $\pi_1(\S)\subset \pi_1(M)$ is separable.
This is often referred to as \textit{peripheral subgroup separability}. We will exploit this result in two cases. The first is the case of the double of the complement of a nonfibered surface $\Sigma \subset N$. The second, perhaps of more conceptual breadth, is an application of `virtual retractibility'.
%extension of \cite[Theorem~1]{FV08a} (see also Theorem \ref{thm:sep}).

We start with the first case.
Let $W$ be a $3$--manifold with empty or toroidal boundary and let $\Sigma \subset W$ be an incompressible nonseparating connected properly embedded surface. Consider the manifold with boundary $M = W \setminus \nu \Sigma$. This manifold has two copies $\S_\pm$ sitting in the boundary. Consider the double $D_M$ of $M$ along $\S_-\cup \S_+\subset \partial M$.
%where we identify in ordered way the boundary components $\S_\pm$ of $M$ via the identity map.
 The images $\Sigma_{\pm} \subset D_M$ are nonseparating incompressible surfaces which are homologous in $H_2(D_M,\partial D_M;\Z)$.

%, that can be seen for the class of manifold pairs at hand as a strengthening of Theorem \ref{mainthm}.

\begin{theorem} Let $D_M$ be defined as above, and let $\phi \in H^1(D_M,\Z)$ be the primitive class Poincar\'e dual to $[\Sigma_{\pm}]$. If
$(D_M,\phi)$ is a non-fibered pair, then there exists an epimorphism $\a : \pi_1(D_M) \to G$ onto a finite group $G$ such that  $\Delta_{D_M,\phi}^\a =0$. \end{theorem}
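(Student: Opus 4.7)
I would argue by contrapositive: assume $\Delta_{D_M,\phi}^\alpha \neq 0$ for every epimorphism $\alpha : \pi_1(D_M) \to G$ onto a finite group, and deduce that $(D_M,\phi)$ fibers. The trivial-representation case of this hypothesis together with Theorem~A(3) forces $D_M$ to be irreducible; moreover the surface $\Sigma_- \subset D_M$ is incompressible (any compression disk, isotoped to minimize intersections with $\Sigma_+$, descends to a compression of $\Sigma$ in $W$), and an incompressible connected surface in an irreducible $3$-manifold is Thurston norm minimizing in its class. Set $X = D_M \setminus \nu\Sigma_-$, and let $\iota_\pm : \pi_1(\Sigma) \to \pi_1(X)$ denote the two inclusions corresponding to the two copies of $\Sigma_-$ in $\partial X$. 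By Theorem~A(4), if $(D_M,\phi)$ is not fibered, at least one of $\iota_+,\iota_-$ fails to be surjective; using the reflection involution $r$ of the double to interchange them, I may assume $\iota_+$ is not surjective and fix $g \in \pi_1(X) \setminus \iota_+(\pi_1(\Sigma))$.

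Since $\iota_+(\Sigma)$ is an incompressible subsurface of $\partial X$, the peripheral subgroup separability theorem of Long--Niblo \cite{LN91} provides a finite quotient $\beta : \pi_1(X) \to H$ with $\beta(g) \notin \beta(\iota_+(\pi_1(\Sigma)))$. The hard part is to upgrade $\beta$ to an epimorphism $\alpha : \pi_1(D_M) \to G$ onto a finite group that still separates $g$ from $\iota_+(\pi_1(\Sigma))$: the group $\pi_1(D_M) = \pi_1(X) \ast_{\pi_1(\Sigma)}$ is an HNN extension with stable letter $t$ satisfying $t\iota_-(x)t^{-1} = \iota_+(x)$, and a naive extension of $\beta$ fails unless the restrictions $\beta \circ \iota_\pm$ agree after conjugation in $H$. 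The standard remedy is a common refinement construction: both $\iota_\pm^{-1}(\ker\beta \cap \iota_\pm(\pi_1(\Sigma)))$ are finite index subgroups of $\pi_1(\Sigma)$, so by residual finiteness of the surface group $\pi_1(\Sigma)$ they contain a common characteristic finite index subgroup $L \triangleleft \pi_1(\Sigma)$; replacing the cover of $X$ associated to $\beta$ by a further finite cover in which every boundary lift of $\iota_\pm(\Sigma)$ is the cover of $\Sigma$ determined by $L$, one glues finitely many such copies of $X$ cyclically along matched boundary lifts to produce a finite cover of $D_M$, and takes $G$ to be the corresponding deck group. By construction, the restriction $\alpha|_{\pi_1(X)}$ factors through $\beta$, so $\alpha(g) \notin \alpha(\iota_+(\pi_1(\Sigma)))$.

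With this $\alpha$ in hand, Theorem~B applied to $\alpha$ yields $\alpha(\iota_+(\pi_1(\Sigma))) = \alpha(\pi_1(X))$, since our standing hypothesis forces $\Delta_{D_M,\phi}^\alpha \neq 0$. But by construction $\alpha(g) \in \alpha(\pi_1(X))$ while $\alpha(g) \notin \alpha(\iota_+(\pi_1(\Sigma)))$, a contradiction that completes the proof. In summary, the plan is: reduce to non-surjectivity of $\iota_+$ via Theorem~A; separate $g$ from the peripheral subgroup inside $\pi_1(X)$ via Long--Niblo; globalize the separation to a finite quotient of $\pi_1(D_M)$ via a common-refinement cover-gluing argument; and conclude via Theorem~B. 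The main obstacle is the middle step --- transporting a finite quotient of $\pi_1(X)$ across the HNN extension structure of $\pi_1(D_M)$ --- where the HNN-associated subgroups $\iota_\pm(\pi_1(\Sigma))$ must be reconciled using residual finiteness of $\pi_1(\Sigma)$.
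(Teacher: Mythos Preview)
Your overall strategy---peripheral subgroup separability \`a la Long--Niblo combined with Theorem~B---is exactly the paper's. The difference lies in \emph{where} you apply Long--Niblo and how you pass from a local finite quotient to one of $\pi_1(D_M)$, and here you make your life much harder than necessary.

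The paper exploits the one structural feature that makes doubles special: the folding retraction $r:D_M\to M$. Since $\Sigma_+\subset\partial M$ is an incompressible boundary subsurface, Long--Niblo applied to $M$ gives an epimorphism $\beta:\pi_1(M)\to G$ onto a finite group with $\beta(\pi_1(\Sigma_+))\subsetneq G$. Then $\alpha:=\beta\circ r_*:\pi_1(D_M)\to G$ is already an epimorphism onto a finite group; since $M$ sits inside $Z:=D_M\setminus\nu\Sigma_+$ with $r|_M=\id$, one has $\alpha(\pi_1(Z))\supset\alpha(\pi_1(M))=G$ while $\alpha(\pi_1(\Sigma_+))=\beta(\pi_1(\Sigma_+))\subsetneq G$, and Theorem~B finishes. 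No HNN bookkeeping is needed at all.

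By contrast, you cut along $\Sigma_-$, apply Long--Niblo to $X=D_M\setminus\nu\Sigma_-$, and then try to promote the resulting $\beta:\pi_1(X)\to H$ to a finite quotient of the HNN extension $\pi_1(D_M)=\pi_1(X)\ast_{\pi_1(\Sigma)}$. Your ``common refinement'' sketch has a real gap: the step ``replace the cover of $X$ by a further finite cover in which every boundary lift of $\iota_\pm(\Sigma)$ is the cover determined by $L$'' asks for a finite-index normal $K\lhd\pi_1(X)$ with $K\subset\ker\beta$ and $\iota_\pm^{-1}(K)=L$ exactly. This is a separability-type statement about $\pi_1(X)$ that does not follow from residual finiteness of $\pi_1(\Sigma)$ alone, and you give no argument for it. Even granting such a cover, the cyclic gluing you describe need not be regular, so speaking of its ``deck group'' as the desired quotient $G$ is unjustified without passing to a normal core and re-checking the separation. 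These issues are not insurmountable, but they are genuinely nontrivial---precisely the difficulty the retraction bypasses for free.

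Two minor points. First, ``an incompressible connected surface in an irreducible $3$--manifold is Thurston norm minimizing'' is false in general (there exist knots with incompressible Seifert surfaces of non-minimal genus); fortunately the $H_0$-part of Theorem~B, which is all you use, follows from the Mayer--Vietoris sequence for any connected non-separating surface dual to $\phi$, so this does not actually damage the argument. Second, your phrase ``$\alpha|_{\pi_1(X)}$ factors through $\beta$'' is backwards; what you need (and what a correct construction would give) is $\ker(\alpha|_{\pi_1(X)})\subset\ker\beta$, from which $\alpha(g)\notin\alpha(\iota_+(\pi_1(\Sigma)))$ does follow.
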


\begin{proof}
Suppose that  $(D_M,\phi)$ is a non--fibered pair.
First note that it is well-known that $\S_+\subset D_M$ is a fiber if and only if $M$ is a product on $\S_+$.

Note that we have a folding map $r : D_M \to M$ that is a retraction.
 In particular the induced map in homotopy $r_{*} : \pi_1(D_M) \to \pi_1(M)$ is an epimorphism, and has as right inverse the inclusion--induced map $i_{*}: \pi_1(M) \to \pi_{1}(D_M)$.
Note that both $r_{*}$ and $i_{*}$ restrict to an isomorphism on the proper subgroups of the domain and image determined by a copy of $\pi_1(\Sigma_{+})$.
Consider now the proper subgroup $\pi_1(\Sigma_{+}) \subset \pi_1(M)$; by peripheral subgroup separability, there is an epimorphism $\beta  : \pi_1(M) \to G$ to a finite group such that $\beta(\pi_1(\Sigma_{+})) \subsetneq \beta(\pi_1(M))$.
The surface $\Sigma_{+} \subset D_M$ is an incompressible surface dual to $\phi$; define $Z := D_M \setminus \nu \Sigma_{+}$. By the usual argument based on the incompressibility of $\Sigma_{+}$, $\pi_1(\Sigma_{+})$ can be viewed as subgroup of $\pi_1(Z)$ and the latter is a subgroup of $\pi_{1}(D_M)$.

The inclusion--induced map $i_{*}: \pi_1(M) \to \pi_{1}(D_M)$ has image in $\pi_1(Z)$. It follows that if we let $\a=\beta \circ r_{*} : \pi_1(D_M) \to G$, then we have
\[ \im\{\pi_1(\S_{+})\xrightarrow{\i} \pi_1(Z)\xrightarrow{\a} G\} \subsetneq \im\{ \pi_1(Z)\xrightarrow{\a} G\}.\]
It now follows from  Theorem B that $\Delta_{D_M,\phi}^\a = 0$.

\end{proof}

The second application of peripheral subgroup separability is in the context of \textit{virtual retractions}.
In \cite{LR08} (see also \cite{LR05}), Darren Long and Alan Reid define and explore the notion of \textit{virtual retraction} of a group to one of its finitely generated subgroup, as well as various related properties. As the authors of
 \cite{LR08} discuss, these notions are closely connected with subgroup separability properties of the group.

 %The authors of the present paper have investigated, in \cite{FV07,FV08a}, some implications of subgroup separability in the context of twisted Alexander polynomials. In summary, the main result of \cite{FV08a} gives the following: Given an (irreducible) orientable closed $3$---manifold $N$ and an incompressible surface $\Sigma \subset N$ representing a nontrivial class in $H_2(N;\Z)$ which is not a fiber, then if $\pi_1(\Sigma) \subset \pi_1(N)$ is separable, then there exist an epimorphism $\alpha : \pi_1(N) \to G$ onto a finite group such that the corresponding twisted Alexander polynomial $\Delta_{N,\phi}^{\alpha} \in \zt$ vanishes, where $\phi$ is the dual class to $[\Sigma]$.
 %
 %It is interesting to see to what extent notions like virtual retraction, and the strictly related notion of homomorphism extension, allow one to obtain similar results, or, stated in other way, if twisted Alexander polynomials (or other $3$--manifold invariants) are sensitive in any way to such group theoretical properties.

 We start by giving the proper definitions, from \cite{LR08}, using a notation that adapts to the case we have in mind.

 \begin{definition} Let $\pi$ be a group and $B$ a subgroup. Then a homomorphism $\theta : B \to G$ \textit{extends over the finite index subgroup} ${\hat \pi} \subset \pi$ if $B \subset {\hat \pi}$ and if there exists a homomorphism $\Theta : {\hat \pi} \to G$ such that $\Theta|_{B} = \theta$. \end{definition}

 \begin{definition} Let $\pi$ be a group and $B$ a subgroup. Then $\pi$ \textit{virtually retracts} onto $B$ if the identity homomorphism $\theta = \id_{B}$ extends over some finite index subgroup of $\pi$. \end{definition}

 Quite clearly, if  $\pi$ virtually retracts onto $B$, \textit{any} homomorphism $\theta  : B \to G$ extends over some finite index subgroup of $\pi$. A less trivial fact, observed in \cite[Theorem~2.1]{LR08}, is that if $\pi$ is LERF and if $B$ is finitely generated, then any homomorphism $\theta$ onto a finite group extends over a finite index subgroup.
 In light of that the following  theorem can be viewed as a generalization of Theorem \ref{thm:sep}.
%  \cite[Theorem~1]{FV08a}.

 %In order to state our main result, we need to introduce some notation. Let $N$ be an irreducible $3$---manifold with positive Betti number, and let $\pi = \pi_1(N)$ its fundamental group. As $b_1(N)  0$, $N$ contains an incompressible non--separating connected surface $\Sigma \subset N$. Denote $M : = N \setminus \nu \Sigma$, and let $A = \pi_1(\Sigma)$ and $B = \pi_1(M)$. The manifold $M$ has, as boundary, two copies of $\Sigma$; the incompressibility of $\Sigma$ entails in particular the existence of inclusion--induced injective morphisms $\i_{\pm} : A \hookrightarrow B \subset \pi$. In particular, $\pi$ can be written as HNN--extension $B_{*A}$. Our main result is then the following:

 \begin{theorem} \label{vr}
 Let $(N,\phi)$ be a non-fibered manifold pair.
 Suppose that there exists a connected Thurston norm minimizing surface $\S$ dual to $\phi$
 such that any homomorphism of $\pi_1(N \setminus \nu \S)$ to a finite group extends to a finite index subgroup of $\pi_1(N)$.
Then there exists an epimorphism $\a:\pi_1(N)\to G$ onto a finite group $G$ such that $\Delta_{N,\phi}^{\a} = 0$.
 \end{theorem}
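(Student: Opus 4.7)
The plan is to build a finite quotient of $\pi_1(N)$ that distinguishes $\pi_1(\S)$ from $\pi_1(M)$ by combining peripheral subgroup separability inside $M$ (Long--Niblo) with the extension hypothesis to pass from $\pi_1(M)$ up to $\pi_1(N)$, and then to invoke the contrapositive of Theorem~B.

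Write $M=N\sm\nu\S$. Since $(N,\phi)$ is non-fibered, Theorem~A together with Dehn's lemma (which gives injectivity of $\i_\pm:\pi_1(\S)\to\pi_1(M)$) ensures that at least one of these maps fails to be surjective; say $\i_+(\pi_1(\S))\subsetneq\pi_1(M)$, and fix $g\in\pi_1(M)\setminus\i_+(\pi_1(\S))$. Since $\S\subset\partial M$ is incompressible, the Long--Niblo peripheral subgroup separability theorem recalled above provides an epimorphism $\beta:\pi_1(M)\to H$ onto a finite group with $\beta(g)\notin\beta(\i_+(\pi_1(\S)))$.

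Now invoke the extension hypothesis: $\beta$ extends to a homomorphism $\hat\beta:\hat\pi\to H$ where $\hat\pi\subset\pi_1(N)$ has finite index and contains $\pi_1(M)$. Let $K=\ker\hat\beta$ and let
\[
K'=\bigcap_{x\in\pi_1(N)}xKx^{-1}
\]
be its normal core in $\pi_1(N)$; this is a normal, finite-index subgroup. Set $G=\pi_1(N)/K'$ and let $\a:\pi_1(N)\to G$ be the quotient map. For every $s\in\pi_1(\S)$, the relation $\beta(g)\ne\beta(\i_+(s))$ gives $g\,\i_+(s)^{-1}\notin K$, and a fortiori $g\,\i_+(s)^{-1}\notin K'$. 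Hence $\a(g)\ne\a(\i_+(s))$ in $G$ for all $s$, so
\[
\a(\i_+(\pi_1(\S)))\subsetneq\a(\pi_1(M)).
\]
Theorem~B, taken in its contrapositive form, then yields $\Delta_{N,\phi}^{\a}=0$.

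The main obstacle is the passage from a finite quotient of $\pi_1(M)$ to a genuine finite quotient of $\pi_1(N)$: the extension hypothesis supplies only a homomorphism defined on a finite-index subgroup $\hat\pi$, so the normal-core construction is needed to manufacture a bona fide finite quotient of the whole group, and one must check that the separating datum $\beta(g)\notin\beta(\i_+(\pi_1(\S)))$ survives the further quotient from $K$ down to $K'$. The inclusion $K'\subset K$ is exactly what makes this survival automatic, which is why the argument goes through cleanly under the stated virtual-retraction-type hypothesis, even in the absence of full LERF.
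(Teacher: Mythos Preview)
Your proof is correct and follows essentially the same route as the paper's own argument: use Long--Niblo peripheral subgroup separability in $M$ to find a finite quotient $\beta:\pi_1(M)\to H$ separating $g$ from $\i_+(\pi_1(\S))$, extend $\beta$ to a finite-index subgroup $\hat\pi\subset\pi_1(N)$ via the hypothesis, pass to the normal core of $\ker\hat\beta$ to obtain a finite quotient $\a$ of $\pi_1(N)$, verify that the separation survives, and apply Theorem~B in contrapositive. Your explicit check that $g\,\i_+(s)^{-1}\notin K'\subset K$ is exactly the verification the paper leaves as ``not difficult''.
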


 \begin{proof}
 We write  $M : = N \setminus \nu \Sigma$ and we write  $\pi = \pi_1(N)$, $A = \pi_1(\Sigma)$ and $B = \pi_1(M)$. The manifold $M$ has, as boundary, two copies of $\Sigma$; the incompressibility of $\Sigma$ entails in particular the existence of inclusion--induced injective morphisms $\i_{\pm} : A \hookrightarrow B \subset \pi$.

 As (a copy of) $\Sigma$ occurs as boundary component of $M$, the image under say $i_{+}$ of $A$ in $B$ (that we will denote by $A$ as well) is separable by peripheral subgroup separability. This means that for any element $\gamma \in B \setminus A$ there exist an epimorphism $\theta : B \to H$ onto some finite group such that $\theta(\gamma) \notin \theta(A)$. Pick such an element $\gamma$. By assumption, $\theta$ extends to an epimorphism $\Theta : {\hat \pi} \to H$ where ${\hat \pi} \subset \pi$ is a finite index subgroup. The kernel $\mbox{ker } \Theta \subset {\hat \pi}$ is a normal finite index subgroup of $\pi$. This subgroup may fail to be normal in $\pi$; define $\Gamma = \cap_{g \in \pi} g {\,} (\mbox{ker } \Theta) {\,} g^{-1}$ to be its normal core in $\pi$, a normal finite index subgroup of both ${\hat \pi}$ and $\pi$. Denote by $G := \pi/\Gamma$, a finite group, and let $\alpha : \pi \to G$ be the quotient map. As ${\hat \pi}/\Gamma$ surjects on ${\hat \pi}/\mbox{ker } \Theta$, it is not difficult to verify that the condition $\Theta(A) \subsetneq \Theta(B)$ entails that $\a(A) \subsetneq \a(B) \subset G$. The theorem is now an immediate consequence of Theorem B. \end{proof}

 Clearly, Theorem \ref{vr} applies when $\pi_1(N)$ virtually retracts to $\pi_1(M)$. Examples where this occurs are $3$--manifolds whose fundamental group embeds into an all right hyperbolic Coxeter subgroup of $\mbox{Isom}(\Bbb{H}^n)$ (see \cite[Theorem~2.6]{LR08}): these groups retract to any finitely generated geometrically finite subgroup, and the only finitely generated geometrically infinite subgroups are virtual fiber groups, hence excluded in the statement of Theorem \ref{vr}.

\end{document}